\newcommand{\bm}[1]{\mbox{\boldmath$#1$}}
\newtheorem{Theorem}{Theorem}[section]
\newtheorem{Lemma}{Lemma}[section]
\newtheorem{Corollary}{Corollary}[section]
\newtheorem{definition}{Definition}[section]
\newtheorem{remark}{Remark}[section]
\numberwithin{equation}{section} \numberwithin{table}{section}
\numberwithin{figure}{section}
\newcommand{\Label}[1]{\label{#1}\mbox{\fbox{\bf #1}\quad}}
\renewcommand{\Label}[1]{\label{#1}}
\newcommand{\Null}[1]{\mathcal{N}(#1)}
\renewcommand{\ker}[1]{\Null}
\begin{document}

\title{A Sharp Korn's Inequality for Piecewise $H^1$ Space and its applications} 
\date{\today}

\begin{abstract}
In this paper, we revisit Korn's inequality for the piecewise $H^1$ space based on general polygonal or polyhedral decompositions of the domain. Our Korn's inequality is expressed with minimal jump terms. These minimal jump terms are identified by characterizing the restriction of rigid body mode to edge/face of the partitions. Such minimal jump conditions are shown to be sharp for achieving the Korn's inequality as well. The sharpness of our result and explicitly given minimal conditions can be used to test whether any given finite element spaces satisfy Korn's inequality, immediately as well as to build or modify nonconforming finite elements for Korn's inequality to hold. 
\end{abstract}

\author{Qingguo Hong$^\dag$,  Young-Ju Lee$^\ddag$ and Jinchao Xu}
\address{Department of Mathematics, Penn State University, PA, USA} 
\address{Department of Mathematics, Texas State University, TX, USA} \email{huq11@psu.edu, yjlee@txstate.edu, xu@math.psu.edu}
%\urladdr{http://www.txstate.edu/}
\thanks{\small The second author is partially supported by Shapiro Fellowship from Penn State in Spring of 2022, and he is also supported by NSF DMS-2208499}
\maketitle

\section{Introduction}\Label{Intro}
The Korn's inequality played a fundamental role in the development of linear elasticity. There is a work reviewing Korn's inequality and its applications in continuum mechanics \cite{horgan1995korn}. In fact, there are a lot of works on proving classical Korn's inequality \cite{nitsche1981korn,wang2003korn,ciarlet2010korn} for functions in $H^1$ vector fields. In \cite{brenner2004korn}, Korn’s inequalities for piecewise $H^1$ vector fields are established. We note that a strengthened version of Korn’s inequality for piecewise $H^1$ vector fields is presented in \cite{mardal2006observation}. By the Korn's inequality established in 
\cite{mardal2006observation}, the nonconforming finite element for 2D introduced in \cite{mardal2002robust} is shown to satisfy the Korn's inequality. In this paper, we revisit Korn's inequality for the piecewise $H^1$ space based on general polygonal or polyhedral decompositions of the domain. Our Korn's inequality is expressed with minimal jump terms, which can readily be used to design degree of freedoms. Namely, we move one step further from the result presented in \cite{mardal2006observation}. In particular, for 3D, we identify that the tangential component of rigid body mode restricted to the face of triangle is indeed the lowest Raviart-Thomas element on face, which is the rotated version discussed in \cite{mardal2006observation}. Additionally, with the minimal jump terms, we show that our Korn's inequality is sharp. We emphasize that with the help of sharpness of the Korn's inequality and explicit form of jump conditions, it is very easy to test whether any given finite element spaces satisfy the Korn's inequality. The proposed minimal jump conditions can also provide a guidance to modify some existing nonconforming finite elements so that the Korn's inequality is satisfied.

Throughout the paper, we shall use the standard notation for Sobolev spaces. Namely, $H^k(\Omega)$ denotes the Sobolev space of scalar functions in $\Omega$ whose derivatives up to order $k$ are square integrable, with the norm $\|\cdot\|_k$. The notation $|\cdot |_k$ denotes the semi-norm derived from the partial derivatives of order equal to $k$. Furthermore, $\|\cdot\|_{k,T}$ and $|\cdot|_{k,T}$ denote the norm $\|\cdot\|_k$ and the semi-norm $|\cdot|_k$ restricted to the domain $T$. Given a partitions $\mathcal{T}_h$ of the domain $\Omega$, where $\Omega$ is a bounded connected open polyhedral domain in $\Reals{d}$ with $d = 2$ or $3$. , we shall also use $H^1(\Omega;\mathcal{T}_h)$ to denote the element-wise $H^1$ functions. We denote the vectors of size $d$ whose components are in $H^1(\Omega;\mathcal{T}_h)$ by $(H^1(\Omega;\mathcal{T}_h))^d$. We also denote $H({\rm div};\Omega)$ by the space consisting of vectors, whose divergence belongs to $L^2(\Omega)$. We use $P_\ell(T)$ for the space of polynomials of degree upto $\ell$ on the domain $T$ while $(P_\ell(T))^d$ denotes the vectors of size $d$ whose components are polynomials of degree at most $\ell$. 

We recall that the operator ${\bm{curl}}$ on a scalar function $q$ in $2D$ is defined by
\begin{equation}
{\bm{curl}} q = \left ( -\frac{\partial q}{\partial y}, \frac{\partial q}{\partial x} \right )^T, 
\end{equation}
while on a vector function $\bm{q} = (q_i)_{i=1,2,3}^T$ in $3D$, it is defined by 
\begin{equation}
{\bm{curl}} \bm{q} = \left ( \frac{\partial q_2}{\partial z} - \frac{\partial q_3}{\partial y}, \frac{\partial q_3}{\partial x} - \frac{\partial q_1}{\partial z}, \frac{\partial q_1}{\partial y} - \frac{\partial q_2}{\partial x} \right )^T, 
\end{equation}
For any given vector space $\bm{V}$, by ${\rm dim} \bm{V}$, we mean the dimension of $\bm{V}$. 

The rest of our paper is organized as follows. In \S \ref{korncd}, we prove the Korn's inequality for piecewise $H^1$ space with minimal jump terms. \S \ref{sharp} considers the sharpness of the inequality, which indicates that the minimal jump terms presented in our Korn's inequality are necessary and can not be reduced any more. With the  Korn's inequality for piecewise $H^1$ space with minimal jump terms and sharpness discussion, in \S \ref{appl}, some of existing finite elements have been discussed. And some application of our theory is provided to modify nonconforming finite elements which do not satisfy the Korn's inequality so that they satisfy the Korn's inequality.  
Lastly, we provide a concluding remark. 

\section{Korn's inequality for the piecewise $H^1$ vector functions}\label{korncd} 

Let $\Omega$ be a bounded connected open polyhedral domain in $\Reals{d}$ with $d = 2$ or $3$ and $\partial \Omega$ be the boundary of the domain $\Omega$. Then the classical Korn's inequality reads as follows:
\begin{equation}
|{\bf{u}} |_{H^1(\Omega)} \leq C_\Omega \left ( \| \mathcal{D}({\bf{u}})\|_0 + \|{\bf{u}}\|_0 \right ), \quad \forall {\bf{u}} \in [H^1(\Omega)]^d, 
\end{equation} 
where the strain tensor $\mathcal{D}({\bf{u}}) \in \Reals{d\times d}$ is given as follows: 
\begin{equation}
\mathcal{D}_{ij}({\bf{u}}) = \frac{1}{2} \left ( \frac{\partial u_i}{\partial x_j} + \frac{\partial u_j}{\partial x_i} \right ) \quad \mbox{ for } 1 \leq i,j \leq d.  
\end{equation} 
Let ${\bm{RM}}(\Omega)$ be the space of rigid motions on $\Omega$ defined by 
\begin{equation}
\bm{RM} (\Omega) = \{ \bm {a} + \bm{A} \bm{x} \in \Reals{d} : \bm{a} \in \Reals{d} \,\, \mbox{ and } \,\, \bm{A} \in {\textsf{Skw}}^{d\times d} \},  
\end{equation}
where $\bm{x} = (x_1, \cdots, x_d)^T \in \Reals{d}$ is the position vector function  on $\Omega$ and ${\textsf{Skw}^{d\times d}}$ is the set of anti-symmetric $d\times d$ matrices. We would like to remark that if $\bm{v} = \bm{v}(\bm{x}) \in \bm{RM}(\Omega)$, then $\bm{v} = \bm{a} + \bm{A}\bm{x}$ for some constant vector $\bm{a}$ and a homogeneous polynomial of degree one  $\bm{A}\bm{x}$ such that $\bm{x} \cdot \bm{A} \bm{x} = 0$. This is relevant to the lowest N\'ed\'elec first kind finite element \cite{monk2003finite}. It is easy to verify that the space $\bm{RM}(\Omega)$ is the kernel of the strain tensor, i.e., it holds that  
\begin{equation}
\bm{RM}(\Omega) = \{\bm{v} \in (H^1(\Omega))^d : \mathcal{D}(\bm{v}) = 0 \}. 
\end{equation} 

\subsection{Korn's inequality with explicit upper bounds}
Given $\Omega \subset \Reals{d}$ with $d = 2$ or $3$, we consider the mesh generation $\mathcal{T}_h$, which is a shape-regular polygonal and/or polyhedral partition for $\Omega$. Let $\mathcal{T}_h = \cup \{T\}$ denote the collection of decompositions and $h = \max_{T \in \mathcal{T}_h} {\rm diam} (T)$ denote the mesh size. We further denote by $\mathcal{E}_h$ the set of all edges/faces for $\mathcal{T}_h$ and 
\begin{equation} 
\mathcal{E}_h = \mathcal{E}_h^{o} \cup \mathcal{E}_h^\partial, 
\end{equation}
where $\mathcal{E}_h^o$ denotes the set of all interior edges/faces of $\mathcal{T}_h$ and $\mathcal{E}_h^{\partial}$ denotes the set of all boundary edges/faces, respectively. Let $\mathcal{V}(T)$ be the set of all vertices for $T \in \mathcal{T}_h$ while $\mathcal{V} (\mathcal{T}_h)$ denotes the set of all vertices for the 
partition $\mathcal{T}_h$.  

Given two adjacent polygon/polyhedron $T^+$ and $T^-$ in  $\mathcal{T}_h$, let $f = \partial T^+ \cap \partial T^-$ 
be the common boundary (interface) between $T^+$ and $T^-$ in $\mathcal{T}_h$, and $n^+$ and $n^-$ be unit normal vectors to $f$ pointing to the exterior of $T^+$ and $T^-$, respectively. For any edge (or face) $f
\in \mathcal{E}_h^{o}$, and a scalar $q$ and vector $\bm{v}$, we define the jumps
\begin{subeqnarray}
\jump{q}_f &=& q|_{\partial T^+\cap f} - q|_{\partial T^-\cap f}, \\ 
\jump{\bm{v}}_f &=& \bm{v}|_{\partial T^+\cap f} - \bm{v}|_{\partial T^-\cap f}.
\end{subeqnarray}
When $f \in \mathcal{E}_h^{\partial}$ then the above quantities are defined as
\begin{equation} 
\jump{q}_f = q|_{f}, \quad \mbox{ and }  \quad \jump{\bm{v}} = \bm{v}|_{f}. 
\end{equation} 
Throughout the paper, we shall also consider the subspace of $\bm {RM}(\Omega)$, denoted by $\bm {RM}^\partial (\Omega)$ defined as follows: 
\begin{equation} 
\bm {RM}^{\partial}(\Omega) = \left \{\bm m \in \bm {RM}(\Omega): ~ \|\bm m\|_{L^2(\partial \Omega)}=1,\int_{\partial \Omega}\bm mds=0 \right \}. 
\end{equation} 
We also consider the following space of piecewise linear vector fields: 
\begin{eqnarray*} 
\bm{V}_h &=& \{\bm v\in (L^2(\Omega))^d:  \bm v_T = \bm v|_T \in (P_1(T))^d, \quad \forall T \in \mathcal {T}_h\} 
\end{eqnarray*}
and space of continuous piecewise linear vector fields:
\begin{eqnarray*} 
\bm{W}_h &=& \{\bm v\in (H^1(\Omega))^d:   \bm v_T =\bm v|_T \in (P_1(T))^d, \quad \forall T \in \mathcal{T}_h\}. 
\end{eqnarray*}
Consider a linear map $E: \bm{V}_h \longrightarrow \bm{W}_h$ defined as follows: 
\begin{equation} 
E (\bm v)(p) = \frac{1}{|\chi_p|}\sum_{T \in \chi_p}\bm v|_T(p), \quad \forall p \in \mathcal{V}(\mathcal{T}_h),
\end{equation} 
where $\chi_p = \{ T \in \mathcal{T}_h: p \in \mathcal{V}(T) \}$, the patch of the vertex $p$, i.e., the collection simplexes that contain $p$ as its vertex, and $|\chi_p|$ is the cardinality of the set $\chi_p$. We note that it holds true 
\begin{equation} 
|\chi_p| \lesssim 1, \quad \forall p \in \mathcal{V}(\mathcal{T}_h).
\end{equation} 
We recall the following approximation property, which can be found in \cite{brenner2004korn}:
\begin{Lemma}
Let $\bm v \in \bm{V}_h$ and $T \in \mathcal{T}_h$. Then, with $\bm v_{T} = \bm v|_T$,  for all $p \in \mathcal{V}(\mathcal{T}_h)$, we have the following estimate: 
\begin{equation} 
|\bm v_{T} - E(\bm v)(p)|^2 \lesssim \sum_{f \in \mathcal{E}_p} |\jump{\bm v}_f(p)|^2, \quad T \in \mathcal{T}_h \, \mbox{ and } \, p \in \mathcal{V}(T),
\end{equation} 
where 
\begin{equation} 
\mathcal{E}_p = \{ e \in \mathcal{E}_h^o : p \in \partial f \}, 
\end{equation} 
is the set of interior sides sharing $p$ as a common vertex, and $\jump{\bm v}_f$ is the jump of $\bm v$ across the edge/face $f$.
\end{Lemma}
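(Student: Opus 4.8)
The plan is to reduce the claim to a telescoping argument over the patch $\chi_p$, combined with the shape-regularity bound $|\chi_p| \lesssim 1$. First I would rewrite the left-hand side by subtracting the value $\bm v_T(p)$ inside the defining average of $E(\bm v)(p)$, namely
\begin{equation*}
\bm v_T(p) - E(\bm v)(p) = \frac{1}{|\chi_p|} \sum_{T' \in \chi_p} \left( \bm v_T(p) - \bm v_{T'}(p) \right),
\end{equation*}
so that the quantity to be controlled is an average, over the patch, of the pairwise differences $\bm v_T(p) - \bm v_{T'}(p)$ of the two affine representatives evaluated at the common vertex $p$.

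The key observation is that each such pairwise difference can be expressed as a signed sum of vertex jumps. For any two elements $T, T' \in \chi_p$ I would select a chain $T = T_0, T_1, \dots, T_k = T'$ of elements of $\chi_p$ in which consecutive members $T_{i-1}, T_i$ share a face $f_i$ with $p \in \partial f_i$; since both members belong to the patch, each such $f_i$ is an interior face, whence $f_i \in \mathcal{E}_p$. Evaluating the jump at $p$ gives $\bm v_{T_{i-1}}(p) - \bm v_{T_i}(p) = \pm \jump{\bm v}_{f_i}(p)$, and telescoping yields
\begin{equation*}
\bm v_T(p) - \bm v_{T'}(p) = \sum_{i=1}^{k} \left( \bm v_{T_{i-1}}(p) - \bm v_{T_i}(p) \right) = \sum_{i=1}^{k} \pm \jump{\bm v}_{f_i}(p).
\end{equation*}

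To finish, I would apply the triangle inequality followed by Cauchy--Schwarz over the chain to get $|\bm v_T(p) - \bm v_{T'}(p)|^2 \lesssim k \sum_{f \in \mathcal{E}_p} |\jump{\bm v}_f(p)|^2$, then average over $T' \in \chi_p$ (using convexity of the square, i.e. a second Cauchy--Schwarz on the averaging sum). Shape-regularity controls both the cardinality $|\chi_p|$ and every chain length $k$ uniformly (each is $\lesssim 1$), so all hidden constants are independent of $h$ and of the particular element. Squaring and collecting terms then produces the stated estimate $|\bm v_T(p) - E(\bm v)(p)|^2 \lesssim \sum_{f \in \mathcal{E}_p} |\jump{\bm v}_f(p)|^2$.

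I expect the main obstacle to be the combinatorial step that guarantees the chains above exist with uniformly bounded length: one must argue that for every vertex $p$, including boundary vertices, the patch $\chi_p$ is connected through faces of $\mathcal{E}_p$ that contain $p$, and that this connectivity can be realized by paths whose length is controlled by the shape-regularity of $\mathcal{T}_h$. Once this connectivity and the uniform bound on $|\chi_p|$ are established, the remaining estimates are routine.
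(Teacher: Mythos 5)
Your argument is correct, but there is nothing in the paper itself to compare it against: the paper does not prove this lemma at all, it is recalled from \cite{brenner2004korn}, and your chaining argument is essentially the proof given in that reference. Your decomposition of $\bm v_T(p)-E(\bm v)(p)$ into an average of pairwise differences over the patch, the telescoping along a face-connected chain of elements of $\chi_p$ (each consecutive difference being a vertex value of a jump across a face in $\mathcal{E}_p$), and the concluding Cauchy--Schwarz and averaging steps reproduce that standard argument. The one genuine hypothesis you rely on is exactly the point you flag: that every vertex patch, including those at boundary vertices, is connected through faces containing $p$, with chain length bounded uniformly. For the general shape-regular polygonal/polyhedral partitions considered in this paper this face-connectivity of patches is part of (or a standing consequence of) the shape-regularity assumption, which also gives $|\chi_p|\lesssim 1$; making that assumption explicit is all that is needed to complete your proof, so your route coincides with the cited source rather than offering a different one.
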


The main observation that will lead us to construct the minimal jump conditions or refined Korn's inequality will be presented at the following simple but important theorem: 
\begin{Theorem}\label{kornpre} 
For any $T \in \mathcal{T}_h$. Let $f \in \partial T$ and $\bm{c}_{f}$ be the barycenter of $f$. For some constant $c, c_1, c_2 \in \Reals{}$, we have for 2D and 3D, respectively, with $\bm{t}_f$ and $\bm{n}_f$ being the tangent vector to the edge and the normal vector to the face,  
\begin{subeqnarray*}
{\bm{RM}}(f)^\perp &=& \{ (\bm{v} \cdot \bm{t}_f) |_f: \forall \bm{v} \in \bm{RM}(T) \} = {\rm span} \{ 1\} \quad \mbox{ for } d = 2 \\
&=& {\rm RT}_0(f) := \{ (\bm{v} \times \bm{n}_f)|_f : \forall \bm{v} \in \bm{RM}(T) \} \quad \mbox{ for } d = 3, 
\end{subeqnarray*} 
where ${\rm RT}_0(f)$ can be characterized as follows: 
\begin{equation}
{\rm RT}_0(f) = \{ (c_1 + c x_1) \bm{t}_1 + (c_2 + c x_2) \bm{t}_2 : x_1 , x_2 \in \Reals{} \}, 
\end{equation} 
where $f = {\rm span} \{\bm{t}_1, \bm{t}_2 \}$ such that $\bm{t}_1 \cdot \bm{t}_2 = 0$. 
%
%
%For $2D$, if $\bm{t}_f$ is the tangent vector parallel to $f$, then 
%\begin{equation}
%\bm{v} \cdot \bm{t}_f = c, \quad \forall \bm{v} \in {\bm{RM}}(T), 
%\end{equation} 
%where $c$ is some constant that depends on $\bm{v}$ and $f$. For $3D$, the following holds true that for all $\bm{v} = \bm{a} + \bm{A} \bm{x} \in {\bm{RM}}(T)$, for $\bm{A} \in \textsf{Skw}^{3\times 3}$, 
%\begin{equation}
%\bm{v} \times \bm{n}_{f} = \bm{b} \times \bm{n}_{f} + c (\bm{x} - \bm{c}_f), \quad \forall \bm{x} \in f, 
%\end{equation}
%where $\bm{b}$ is some constant vector, $c$ is a constant and 
%$(\bm{x} - \bm{c}_f) \cdot \bm{n}_f = 0$.  
\end{Theorem}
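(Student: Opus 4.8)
The plan is to prove both identities by a single direct computation, writing a generic element of $\bm{RM}(T)$ as $\bm{v}(\bm{x}) = \bm{a} + \bm{A}\bm{x}$ with $\bm{a}\in\Reals{d}$ and $\bm{A}\in{\textsf{Skw}}^{d\times d}$, parametrizing the edge/face through its barycenter $\bm{c}_f$, and exploiting the antisymmetry of $\bm{A}$. For the case $d=2$, I would parametrize the edge as $\bm{x}(s) = \bm{c}_f + s\,\bm{t}_f$, so that $\bm{v}(\bm{x}(s)) = (\bm{a} + \bm{A}\bm{c}_f) + s\,\bm{A}\bm{t}_f$. Projecting onto the tangent gives $\bm{v}\cdot\bm{t}_f = (\bm{a}+\bm{A}\bm{c}_f)\cdot\bm{t}_f + s\,(\bm{A}\bm{t}_f)\cdot\bm{t}_f$, and since $\bm{A}$ is antisymmetric the quadratic form $\bm{t}_f^{T}\bm{A}\bm{t}_f$ vanishes identically. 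Hence $\bm{v}\cdot\bm{t}_f$ is constant along $f$, so the trace set is contained in ${\rm span}\{1\}$; surjectivity onto ${\rm span}\{1\}$ follows by choosing the translation $\bm{a}$ so that $\bm{a}\cdot\bm{t}_f$ attains any prescribed value.

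For $d=3$ I would use the identification $\bm{A}\bm{x} = \bm{\omega}\times\bm{x}$ with $\bm{\omega}\in\Reals{3}$, and introduce a right-handed orthonormal frame $\{\bm{t}_1,\bm{t}_2,\bm{n}_f\}$ adapted to the face, writing points on $f$ as $\bm{x} = \bm{c}_f + x_1\bm{t}_1 + x_2\bm{t}_2$. Setting $\bm{b} = \bm{a}+\bm{\omega}\times\bm{c}_f$, one gets $\bm{v}\times\bm{n}_f = \bm{b}\times\bm{n}_f + x_1(\bm{\omega}\times\bm{t}_1)\times\bm{n}_f + x_2(\bm{\omega}\times\bm{t}_2)\times\bm{n}_f$. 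The key step is to expand the two linear terms via the vector triple product $(\bm{p}\times\bm{q})\times\bm{r} = (\bm{p}\cdot\bm{r})\bm{q} - (\bm{q}\cdot\bm{r})\bm{p}$; since $\bm{t}_i\cdot\bm{n}_f = 0$, each collapses to $(\bm{\omega}\cdot\bm{n}_f)\,\bm{t}_i$. Writing $c = \bm{\omega}\cdot\bm{n}_f$ and resolving the constant term $\bm{b}\times\bm{n}_f$ in the tangent basis (using $\bm{t}_1\times\bm{n}_f = -\bm{t}_2$ and $\bm{t}_2\times\bm{n}_f = \bm{t}_1$) produces exactly $(c_1 + c\,x_1)\bm{t}_1 + (c_2 + c\,x_2)\bm{t}_2$, which is the asserted ${\rm RT}_0(f)$ form.

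Finally I would confirm surjectivity and the dimension count. The three parameters are genuinely independent: $c = \bm{\omega}\cdot\bm{n}_f$ is realized by any $\bm{\omega}$ with prescribed normal component (e.g.\ $\bm{\omega} = c\,\bm{n}_f$), while $c_1,c_2$ are the tangent coordinates of $\bm{b}\times\bm{n}_f$ and range over the whole tangent plane because the translation $\bm{a}$ is unconstrained. Thus the trace map $\bm{v}\mapsto(\bm{v}\times\bm{n}_f)|_f$ is onto the three-dimensional space on the right, giving equality.

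I expect the main obstacle to be purely book-keeping rather than conceptual: orienting the frame $\{\bm{t}_1,\bm{t}_2,\bm{n}_f\}$ consistently and carrying the triple-product expansions without sign errors, so that the coefficient of the linear part is seen to be \emph{one and the same} scalar $\bm{\omega}\cdot\bm{n}_f$ in both tangent components. This shared coefficient is precisely what forces the trace to land in ${\rm RT}_0(f)$ instead of a larger space of affine tangential fields, and it is the conceptual heart of the statement.
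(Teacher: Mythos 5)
Your proposal is correct, and it reaches the conclusion by a cleaner route than the paper. The paper's proof is a brute-force component computation: in 2D it parametrizes the edge as a graph $y=mx+n$, and in 3D it writes out the antisymmetric matrix $\bm{A}$ entrywise, computes $\bm{A}\bm{x}\times\bm{n}_f$ coordinate by coordinate, introduces the scalar $\eta=\bm{n}_f\cdot(-a_3,a_2,-a_1)$ (which is exactly your $c=\bm{\omega}\cdot\bm{n}_f$ in disguise), and then splits into the two cases $\eta=0$ and $\eta\neq 0$, treating the constant and genuinely linear traces separately. Your argument replaces all of this with two coordinate-free identities: the vanishing of the quadratic form $\bm{t}_f^{T}\bm{A}\bm{t}_f$ for antisymmetric $\bm{A}$ in 2D, and the axial-vector representation $\bm{A}\bm{x}=\bm{\omega}\times\bm{x}$ combined with the triple-product expansion $(\bm{\omega}\times\bm{t}_i)\times\bm{n}_f=(\bm{\omega}\cdot\bm{n}_f)\bm{t}_i$ in 3D, which produces the shared linear coefficient $c$ in both tangential components at once and needs no case analysis. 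Your proof also has a genuine advantage in completeness: the theorem asserts a set \emph{equality}, and you verify surjectivity explicitly (any $(c_1,c_2,c)$ is realized by $\bm{\omega}=c\,\bm{n}_f$ and a suitable translation $\bm{a}$), whereas the paper's proof only establishes that every trace has the asserted form and leaves the reverse inclusion and the dimension count to a remark following the theorem. What the paper's heavier computation buys is the explicit intermediate formula $\bm{A}\bm{x}\times\bm{n}_f=\bm{d}+c(\bm{x}-\bm{c}_f)$ with $\bm{d}\cdot\bm{n}_f=0$, which is quoted verbatim later in the proof of the main Korn inequality (Theorem 2.2) when the jump $\jump{\Pi\bm{u}}_f\times\bm{n}_f$ is decomposed; your identities deliver the same formula, so nothing downstream is lost.
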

\begin{proof}
We begin our proof for the {\rm 2D} case. Choose $\bm{v} \in {\bm{RM}}(T)$. Then, $\bm{v}$ is of the following form: 
\begin{equation}
\bm{v} = \bm{a} + b (y, -x)^T, 
\end{equation} 
for some constant vector $\bm{a}$ and a constant $b$. Without loss of generality, we assume that $f \in \partial T$ can be expressed as a linear function $y = mx + n$. Then, we have that $\bm{t}_f =\frac{1}{\sqrt{1+m^2}} (1,m)^T$ and 
\begin{equation} 
(\bm{v} \cdot \bm{t}_f) |_f= c_0, 
\end{equation}
where $c_0$ is some constant. We shall now consider {\rm 3D} case. For any $\bm{v} \in {\bm{RM}}(T)$ can be given as $\bm{v} = \bm{a} + \bm{A} \bm{x}$, where $\bm{A}$,  $\bm{x}$, and $\bm{n}_f$ can be denoted by the followings: 
\begin{eqnarray*}
\bm{A} = \left ( \begin{array}{ccc} 0 & a_1 & a_2 \\ -a_1 & 0 & a_3 \\ -a_2 & -a_3 & 0  \end{array} \right ), \quad  \bm{x} = \left ( \begin{array}{c} x_1 \\ x_2 \\ x_3 \end{array} \right ) 
, \quad \mbox{ and } \quad \bm{n}_{f} = \left ( \begin{array}{c} n_1 \\ n_2 \\ n_3 \end{array} \right ). 
\end{eqnarray*}
Then a simple calculation leads to 
\begin{eqnarray*}
\bm{v} \times \bm{n}_{f} = \bm{a} \times \bm{n}_f + \left ( \begin{array}{c} a_1 x_2 + a_2 x_3 \\ -a_1 x_1 + a_3x_3  \\ -a_2x_1 - a_3x_2 \end{array} \right ) \times  \left ( \begin{array}{c} n_1 \\ n_2 \\ n_3 \end{array} \right ).
\end{eqnarray*}
Particularly, we pay attention to the following quantity: 
\begin{eqnarray*}
\bm{A} \bm{x} \times \bm{n}_{f} = \left ( \begin{array}{c} \eta x_1 + a_3 \bm{n}_f \cdot \bm{x} \\ \eta x_2 - a_2 \bm{n}_f \cdot \bm{x}  \\ \eta x_3 + a_1 \bm{n}_f \cdot \bm{x} \end{array} \right ) = 
\left ( \begin{array}{c} \eta x_1 + a_3 \bm{n}_f \cdot (\bm{x} - \bm{c}_f) + a_3 \bm{n}_f \cdot \bm{c}_f \\ \eta x_2 - a_2 \bm{n}_f \cdot ( \bm{x} -\bm{c}_f ) - a_2 \bm{n}_f \cdot \bm{c}_f  \\ \eta x_3 + a_1 \bm{n}_f \cdot ( \bm{x} - \bm{c}_f ) + a_1 \bm{n}_f \cdot \bm{c}_f \end{array} \right )
%&=& \left ( \begin{array}{c} a_1 \\ a_2 \\ a_3 \end{array} \right ) \times  \left ( \begin{array}{c} n_1 \\ n_2 \\ n_3 \end{array} \right )  
%+ \left ( \begin{array}{c} h {\rm{const}} \\ -g {\rm{const}} \\ f {\rm{const}} \end{array} \right ) + \left ( \begin{array}{c} -f n_3 x_1 + g n_2 x_1 - h n_1 x_1 \\ g n_2 x_2 - h n_1 x_2 - f n_3 x_2  \\ 
%- f n_3 x_3 + g n_2 x_3 - h n_1 x_3  \end{array} \right ) \\
\end{eqnarray*}
where $\eta = \bm{n}_f \cdot (-a_3, a_2, -a_1) = -a_1 n_3 + a_2 n_2 - a_3 n_1$ and $\bm{c}_f$ is the barycenter of the face $f$. 
In case $\eta = 0$, $(\bm{A} \bm{x} \times \bm{n}_f)|_f = \bm{n}_f \cdot \bm{c}_f (a_3, -a_2, a_1)^T.$ Note that $(a_3, -a_2, a_1)^T \in f$ and so, it can be expressed as $\bm{\mu} \times \bm{n}_f$ for some vector $\bm{\mu}$. Therefore, $(\bm{v} \times \bm{n}_f)|_f = \bm{b} \times \bm{n}_f$ with $\bm{b} = \bm{a} + \bm{\mu}$. In case $\eta \neq 0$, we observe that for $\bm{x} \in f$,  
\begin{eqnarray*}
\bm{A} \bm{x} \times \bm{n}_{f} = \eta (\bm{x} - \bm{c}),  
\end{eqnarray*} 
where 
\begin{equation}
\bm{c} =  \frac{1}{\eta} \left ( \begin{array}{c} -a_3 \bm{n}_{f} \cdot \bm{c}_f \\  a_2  \bm{n}_{f} \cdot \bm{c}_f \\ - a_1 {\bm{n}}_{f} \cdot \bm{c}_f \end{array} \right )  \quad \mbox{ and } \quad (\bm{x} - \bm{c})\cdot \bm{n}_f = 0.  
\end{equation}
This means that we can modify $\bm{A}\bm{x} \times \bm{n}_f$ further into the following, with $c = \eta$, 
\begin{equation}
\bm{A} \bm{x} \times \bm{n}_{f} = c (\bm{x} - \bm{c}) = c (\bm{x} - \bm{c}_  f + \bm{c}_f - \bm{c}) = \bm{d} + c (\bm{x} - \bm{c}_f),  
\end{equation}
where $\bm{d}$ is a constant vector such that $\bm{d} \cdot \bm{n}_f = 0$. Therefore, we arrive at the conclusion and this completes the proof. 
\end{proof}
For 3D, we remark that for any given $T \in \mathcal{T}_h$ with $f$ being a face of $T$, the dimension of the space $\{\bm{q}(\bm{x}) \times \bm{n}_f : \bm{q} \in \bm{RM}(T), \,\, \forall \bm{x} \in f\}$ is three and we denote the space $\{ (c_1 + c x_1) \bm{t}_1 + (c_2 + c x_2) \bm{t}_2 : x_1, x_2 \in \Reals{} \}$ by ${\rm RT}_0(f)$ \cite{xie2008uniformly}. Now, we can prove the Korn's inequality for functions in $(H^1(\Omega,\mathcal{T}_h))^d$. We note that the spaces of projection onto the face of each $T$ are different for {\rm 2D} and {\rm{3D}}, the detailed jump terms shall be stated accordingly. We first define $\pi_1$ by the $L^2$ orthogonal projection from $L^2(f)$ onto $P_1(f)$ and $\pi_{RM^\perp(f)}$ is the $L^2$ orthogonal projection form $L^2(f)$ onto $P_0(f)$ for $d = 2$, but it is from $L^2(f)$ to ${{\rm RT}}_0(f)$ for $d = 3$, respectively. 

To state the discrete Korn's inequality, we shall first introduce on each $T \in \mathcal{T}_h$, a projection operator $\Pi_T$ from $(H^1(T))^d$ onto the $\bm{RM}(T)$ by the following conditions:
\begin{equation} 
\left|\int_{T}(\bm v - \Pi_{T}\bm v)dx\right|=0, \quad  \forall \bm v \in (H^1(T))^d,
\end{equation}
\begin{equation} 
\left|\int_{T}\nabla \times(\bm v-\Pi_{T}\bm v) dx\right| = 0, \quad \forall \bm v \in (H^1(T))^d.
\end{equation}
Hence, by the definition of $\Pi_T$, we have (see (3.3) in \cite{brenner2004korn})
\begin{equation} \label{3.3}
| \bm v-\Pi_{T}\bm v |_{H^1(T)} \lesssim  
\| \mathcal{D} (\bm v - \Pi_T \bm v) \|_0 = \|\mathcal{D} (\bm v) \|_0, \quad \forall \bm v \in (H^1(T))^d.
\end{equation}
\begin{equation} \label{3.4}
\|\bm v-\Pi_{T}\bm v\|_0\lesssim  (\hbox{diam} T)|\bm v-\Pi_{T}\bm v|_{H^1(T)}, \quad \forall \bm v\in (H^1(T))^d.
\end{equation}
Using this local projection $\Pi_T$, we define $\Pi: (H^1(\Omega,\mathcal{T}_h))^d\longrightarrow \boldsymbol{V}_h$ by  
\begin{equation}
(\Pi \bm u) = \Pi_T\bm u_T, \quad \forall T \in \mathcal{T}_h.
\end{equation} 
Next, following \cite{brenner2004korn}, we also introduce a seminorm on $(H^1(\Omega;\mathcal{T}_h))^d$, denoted by $\Phi$ satisfying the following Assumptions:
\begin{itemize} 
\item[(C1)] $|\Phi(\bm w)| \lesssim \|\bm w\|_{1}, \quad \forall \bm{w} \in (H^1(\Omega))^d,$ 
\item[(C2)] $\Phi(\bm{m}) = 0$ and $\bm{m} \in {\bf{RM}}(\Omega)$ if and only if $\bm{m}$ is constant. 
\item[(C3)] $(\Phi(\bm v - E \bm v))^2 \lesssim \sum_{f \in \mathcal{E}_h^o}(\hbox{diam}(f))^{d-2}\sum_{p\in  \mathcal{V}(f)}|\jump{\bm v}_f(p)|^2, \, \forall \bm v \in \bm{V}_h$, where $\mathcal{V}(f)$ is the set of the vertices of $f$.
\end{itemize} 
The first estimate is well-known for the piecewise $H^1$ functions, see \cite{brenner2004korn} : 
\begin{Lemma}
Let $\Phi$ be the seminorm on $H^1(\Omega;\mathcal{T}_h)$ satisfying the Assumptios (C1), (C2), and (C3). Then, the following estimate holds:
\begin{equation}\label{2.9}
|\bm v|^2_{H^1(\Omega,\mathcal{T}_h)} \lesssim \|\mathcal{D}_{\mathcal {T}}(\bm v)\|_0^2+
(\Phi(\bm v))^2+\sum_{f \in \mathcal{E}^o_h}(\hbox{diam}~ f)^{d-2}\sum_{p\in  \mathcal{V}(f)}|\jump{\bm v}_f(p)|^2
\end{equation}
for all $\bm v \in \bm{V}_h$, where $\mathcal{D}_{\mathcal {T}}(\bm v)|_T=\mathcal{D}(\bm v|_T)$
for all $T \in \mathcal T_h$. 
\end{Lemma}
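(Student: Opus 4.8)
The plan is to exploit that $\bm v \in \bm{V}_h$ is already piecewise linear and to split it into a globally conforming part and a nonconforming remainder measured by vertex jumps, so that the local projection $\Pi$ is not needed here. First I would write $\bm v = (\bm v - E\bm v) + E\bm v$ with $E\bm v \in \bm{W}_h \subset (H^1(\Omega))^d$, and estimate the nonconforming remainder element by element. On each $T$ the field $\bm v - E\bm v$ is linear, so by equivalence of norms on the finite-dimensional space of linear vector fields on the reference element followed by an affine scaling, $|\bm v - E\bm v|_{H^1(T)}^2 \lesssim (\hbox{diam}\,T)^{d-2}\sum_{p\in\mathcal{V}(T)}|(\bm v - E\bm v)(p)|^2$. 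Summing over $T \in \mathcal{T}_h$ and applying the approximation property stated above, which bounds each vertex discrepancy $|\bm v_T(p) - E(\bm v)(p)|^2$ by $\sum_{f\in\mathcal{E}_p}|\jump{\bm v}_f(p)|^2$, yields $|\bm v - E\bm v|_{H^1(\Omega,\mathcal{T}_h)}^2 \lesssim \sum_{f\in\mathcal{E}_h^o}(\hbox{diam}\,f)^{d-2}\sum_{p\in\mathcal{V}(f)}|\jump{\bm v}_f(p)|^2$, which is precisely the last term of \eqref{2.9}. Here shape-regularity and $|\chi_p|\lesssim 1$ are used to pass from the element-wise sum to the face-wise sum.

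For the conforming part $E\bm v \in (H^1(\Omega))^d$ I would invoke the classical Korn inequality in the functional form $|\bm w|_{H^1(\Omega)} \lesssim \|\mathcal{D}(\bm w)\|_0 + |\Phi(\bm w)|$, valid for all $\bm w \in (H^1(\Omega))^d$. This is where (C1) and (C2) enter: the estimate follows from a Peetre--Tartar/compactness argument combined with the standard Korn inequality, since any $\bm w$ annihilating both right-hand side terms satisfies $\mathcal{D}(\bm w) = 0$, hence $\bm w \in \bm{RM}(\Omega)$, and then $\Phi(\bm w) = 0$ forces $\bm w$ to be constant by (C2), so that $|\bm w|_{H^1(\Omega)} = 0$; (C1) supplies the continuity of $\Phi$ needed to pass to the limit. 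Applying this to $\bm w = E\bm v$ reduces the task to bounding $\|\mathcal{D}(E\bm v)\|_0$ and $|\Phi(E\bm v)|$ by the quantities already present on the right of \eqref{2.9}.

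Both remaining terms I would control by triangle inequalities that reintroduce the nonconforming remainder of Paragraph 1. For the strain, $\|\mathcal{D}(E\bm v)\|_0 \le \|\mathcal{D}_{\mathcal{T}}(\bm v)\|_0 + \|\mathcal{D}_{\mathcal{T}}(\bm v - E\bm v)\|_0$, and the last term is $\lesssim |\bm v - E\bm v|_{H^1(\Omega,\mathcal{T}_h)}$, already bounded by the jump sum. For the functional, subadditivity of the seminorm $\Phi$ gives $|\Phi(E\bm v)| \le |\Phi(\bm v)| + |\Phi(\bm v - E\bm v)|$, and since $\bm v - E\bm v$ is exactly of the form $\bm u - E\bm u$ with $\bm u = \bm v$, assumption (C3) bounds $\Phi(\bm v - E\bm v)^2$ by the same jump sum. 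Collecting the three contributions and using $|\bm v|_{H^1(\Omega,\mathcal{T}_h)} \le |\bm v - E\bm v|_{H^1(\Omega,\mathcal{T}_h)} + |E\bm v|_{H^1(\Omega)}$ yields \eqref{2.9}. I expect the principal obstacle to be Paragraph 2, that is, establishing the continuous functional Korn inequality with the abstract seminorm $\Phi$: this is not an algebraic manipulation but requires the compactness argument, and one must verify that (C1)--(C2) genuinely exclude any nontrivial common kernel beyond the constants that the seminorm $|\cdot|_{H^1(\Omega)}$ already annihilates.
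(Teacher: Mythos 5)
Your proposal is correct, and it is essentially the argument the paper relies on: the paper gives no proof of this lemma but cites \cite{brenner2004korn}, and the proof there proceeds exactly as you do --- split $\bm v = (\bm v - E\bm v) + E\bm v$, control the nonconforming part by scaling plus the vertex-jump approximation lemma, apply the generalized Korn inequality $|\bm w|_{H^1(\Omega)} \lesssim \|\mathcal{D}(\bm w)\|_0 + |\Phi(\bm w)|$ (via the compactness/Peetre--Tartar argument using (C1)--(C2)) to the conforming part $E\bm v$, and close with triangle inequalities and (C3). The only point to keep in mind is that the vertex-based norm equivalence and the well-definedness of $E\bm v$ as an affine function on each element implicitly require simplicial elements (or a suitable subdivision), a caveat inherited from the paper's own setup rather than introduced by your argument.
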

Now, in order to refine the discrete Korn's inequality, we further impose an addition Assumption for $\Phi$ as follows: 
\begin{itemize} 
\item[(C4)] $|\Phi(\bm u-\Pi\bm u)| \lesssim \|\mathcal D_\mathcal{T}(\bm u)\|_0, \quad \forall \bm u \in (H^1(\Omega,\mathcal{T}_h))^d$. 
\end{itemize} 
\begin{remark}
For  $u\in (H^1(\Omega))^d$, Assumption (C1) implies Assumption (C4), but for $u \in (H^1(\Omega,\mathcal{T}_h))^d$, Assumption (C4) can not be derived from Assumption (C1). 
\end{remark}
We then obtain the following main result in this paper: 
\begin{Theorem}\label{main}
Let $\Phi:(H^1(\Omega, \mathcal{T}_h))^d\longrightarrow \Reals{}$ be a seminorm satisfying  the Assumptions (C1), (C2),  (C3) and (C4). We have the following results for 2D and 3D, respectively. For $2D$, we have 
\begin{eqnarray*} 
&& |\bm u|^2_{H^1(\Omega,\mathcal{T}_h)} \lesssim \|\mathcal{D}_{\mathcal {T}}(\bm u)\|_0^2 + (\Phi(\bm u))^2 \\ 
&&+ \sum_{f \in \mathcal{E}^o_h}(\hbox{diam}(f))^{-1} \Big(\|\left [\pi_1(\jump{\bm u}_f \cdot \bm n_f) \right ] \bm{n}_f \|_{0,f}^2 + 
\| \left [\pi_{RM^\perp(f)}(\jump{\bm u}_f \cdot \bm t_f) \right ] \bm t_f \|_{0,f}^2\Big). 
\end{eqnarray*}
For $3D$, we have  
\begin{eqnarray*} 
&& |\bm u|^2_{H^1(\Omega,\mathcal{T}_h)} \lesssim \|\mathcal{D}_{\mathcal {T}}(\bm u)\|_0^2 +
(\Phi(\bm u))^2 \\
&&+ \sum_{f \in \mathcal{E}^o_h}(\hbox{diam}(f))^{-1}\Big(\|\left [ \pi_1(\jump{\bm u}_f\cdot \bm n_f) \right ] \bm{n}_f \|_{0,f}^2+\| \left [\pi_{RM^\perp(f)}(\jump{\bm u}_f \times \bm n_f) \right ] \times \bm{n}_f\|_{0,f}^2\Big). 
\end{eqnarray*}
\end{Theorem}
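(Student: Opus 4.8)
The plan is to refine the baseline inequality \eqref{2.9} from the preceding lemma, whose jump term is the full $\sum_{p\in\mathcal{V}(f)}|\jump{\bm v}_f(p)|^2$, by replacing it with the much smaller projected jump terms appearing in the statement. The guiding principle is \textbf{Theorem \ref{kornpre}}: the restriction of any rigid body mode to a face $f$ lives in a low-dimensional space ($\mathrm{span}\{1\}$ tangentially plus the full normal component in 2D; $\mathrm{RT}_0(f)$ tangentially plus the full normal component in 3D). Consequently, if one subtracts off an appropriate rigid motion on each element, the only part of the jump that can be \emph{controlled from below by} the strain energy is precisely the part that survives the projections $\pi_1$ (applied to the normal component) and $\pi_{RM^\perp(f)}$ (applied to the tangential component); the complementary part of the jump is already accounted for by $\|\mathcal{D}_{\mathcal T}(\bm u)\|_0$ and $\Phi$.

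First I would reduce the general $\bm u\in (H^1(\Omega;\mathcal T_h))^d$ to the piecewise-linear case by inserting $\Pi\bm u\in\bm V_h$. Writing $\bm u=(\bm u-\Pi\bm u)+\Pi\bm u$ and using the triangle inequality, the estimates \eqref{3.3}--\eqref{3.4} bound $|\bm u-\Pi\bm u|_{H^1(\Omega,\mathcal T_h)}$ by $\|\mathcal D_{\mathcal T}(\bm u)\|_0$ elementwise, and Assumption (C4) controls $\Phi(\bm u-\Pi\bm u)$ by the same quantity. It therefore suffices to prove the inequality for $\bm v:=\Pi\bm u\in\bm V_h$, provided I can afterwards relate the jump of $\bm v$ to that of $\bm u$: since $\jump{\bm v}_f=\jump{\bm u}_f-\jump{\bm u-\Pi\bm u}_f$, the second piece is controlled by a trace/scaling argument (trace inequality on each $T$ plus \eqref{3.3}--\eqref{3.4}) against $\|\mathcal D_{\mathcal T}(\bm u)\|_0$, so the projected-jump terms for $\bm v$ and for $\bm u$ differ only by terms absorbed into the strain energy.

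Second, for $\bm v\in\bm V_h$ I would start from \eqref{2.9} and show that on each interior face $f$ the full nodal jump sum $(\hbox{diam}\,f)^{d-2}\sum_{p}|\jump{\bm v}_f(p)|^2$ is equivalent, up to the usual scaling factor $(\hbox{diam}\,f)^{-1}$ and a constant, to $\|\jump{\bm v}_f\|_{0,f}^2$ (a scaled equivalence of norms on the finite-dimensional trace space $P_1(f)^d$). The crux is then to split $\jump{\bm v}_f=(\jump{\bm v}_f\cdot\bm n_f)\bm n_f+(\text{tangential part})$ and argue that the components orthogonal to the projected spaces, namely $(I-\pi_1)$ of the normal component and $(I-\pi_{RM^\perp(f)})$ of the tangential component, are exactly the directions into which a single rigid motion $\Pi_T\bm u$ can be chosen so as to make those pieces of the jump vanish or be dominated by $\|\mathcal D_{\mathcal T}(\bm u)\|_0$. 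Here Theorem \ref{kornpre} is used directly: $\pi_{RM^\perp(f)}$ projects onto the space of rigid-mode tangential traces ($\{1\}$ in 2D, $\mathrm{RT}_0(f)$ in 3D), so the \emph{orthogonal complement} of that projection is the part of the jump that a suitable rigid motion cannot reproduce, which is the part genuinely bounded below by the strain. This is the step that turns the abstract kernel characterization into the explicit minimal jump terms.

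The main obstacle I anticipate is the second step: carefully controlling the \emph{removable} part of the jump, i.e., showing that the components killed by $\pi_1$ and $\pi_{RM^\perp(f)}$ can be absorbed into $\|\mathcal D_{\mathcal T}(\bm u)\|_0^2+(\Phi(\bm u))^2$ rather than needing to be retained. This amounts to verifying that, after the elementwise rigid-motion subtraction implicit in $\Pi$, the only obstruction to continuity across $f$ that the strain energy fails to see is precisely the projected jump; equivalently, that the seminorm $\Phi$ together with $\|\mathcal D_{\mathcal T}\|_0$ already encodes the rigid-mode-compatible part of the jump. Getting the scaling powers $(\hbox{diam}\,f)^{d-2}$ versus $(\hbox{diam}\,f)^{-1}$ to match under the trace/inverse inequalities, uniformly in the shape-regular family, will require the standard scaling-to-a-reference-element bookkeeping, but the conceptual weight rests on correctly identifying, via Theorem \ref{kornpre}, which part of $\jump{\bm u}_f$ is rigid-mode reachable and hence need not be penalized.
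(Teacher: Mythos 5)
Your proposal follows essentially the same route as the paper's proof: reduce to $\bm v=\Pi\bm u$ via \eqref{3.3}--\eqref{3.4} and Assumption (C4), apply the baseline estimate \eqref{2.9} together with an inverse estimate to pass from nodal jumps to $\|\jump{\Pi\bm u}_f\|_{0,f}^2$, invoke Theorem \ref{kornpre} to see that the jump of the piecewise rigid motion $\Pi\bm u$ lies exactly in the ranges of $\pi_1$ and $\pi_{RM^\perp(f)}$ (so these projections lose nothing on $\jump{\Pi\bm u}_f$), and finally transfer back to $\bm u$ by a triangle inequality plus a trace argument against $\|\mathcal D_{\mathcal T}(\bm u)\|_0$. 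One small caveat: your phrases ``controlled from below by the strain energy'' and ``bounded below by the strain'' invert the intended direction of control --- the complementary, unprojected jump components are bounded \emph{above} by the strain energy, which is why they may be dropped --- but the surrounding structure of your argument makes clear you mean the right thing.
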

\begin{proof}
Let $\bm u\in (H^1(\Omega,\mathcal{T}_h))^d$, from \eqref{2.9} and \eqref{3.3}, we have 
\begin{equation*}\label{3.8} 
\begin{split}
|\bm u|^2_{H^1(\Omega,\mathcal{T}_h)}&\lesssim |\bm u-\Pi \bm u|^2_{H^1(\Omega,\mathcal{T}_h)}+  |\Pi \bm u|^2_{H^1(\Omega,\mathcal{T}_h)}\\
&\lesssim
\|\mathcal{D}_{\mathcal {T}}(\bm u)\|_0^2+
(\Phi(\Pi\bm u))^2+\sum_{f \in \mathcal{E}^o_h}(\hbox{diam}(f))^{d-2}\sum_{p\in  \mathcal{V}(f)}|\jump{\Pi \bm u}_f(p)|^2.
\end{split}
\end{equation*}
Using condition (C4), we find
\begin{equation}\label{3.8} 
\Phi(\Pi \bm u)\leq \Phi(\bm u-\Pi\bm u)+\Phi(\bm u)\lesssim \|\mathcal{D}_{\mathcal {T}}(\bm u)\|_0+\Phi(\bm u).
\end{equation}
Let $f \in \mathcal{E}^o_h$ be arbitrary and $p \in \mathcal V(f)$, we have, by inverse estimate
\begin{equation}\label{3.9} 
|\jump{\Pi\bm u}_f(p)|^2\lesssim  (\hbox{diam}(f))^{1-d} \|
\jump{\Pi\bm u}_f\|_{0,f}^2.
\end{equation}
We first prove the $d = 2$ case. Let $f = T^{+} \cap T^{-}$ 
and choose $\bm{n}_f = \bm n^{-}$ as the unit normal vector and 
$\bm{t}_f = \bm t^{-}$ as the unit tangential vector of $f$. 
Then we have that 
\begin{equation}
\begin{split}
\|\jump{\Pi\bm u}_f\|_{0,f}^2 &= \int_f |(\jump{\Pi\bm u}_f \cdot \bm{n}_f )\bm{n}_f |^2ds + \int_f|(\jump{\Pi\bm u}_f \cdot \bm{t}_f)\bm{t}_f |^2 ds
 \\
&= \int_f (\jump{\Pi\bm u}_f \cdot \bm{n}_f)^2
+(\jump{\Pi\bm u}_f \cdot \bm{t}_f)^2 ds.
\end{split}
\end{equation}
Using Theorem \ref{kornpre}, we see that 
\begin{equation} 
\jump{\Pi\bm u}_f \cdot \bm{t}_f = c
\end{equation} 
for some constant $c$. Therefore, we have that 
\begin{equation}\label{3.10} 
\begin{split}
\|\jump{\Pi\bm u}_f\|_{0,f}^2&=\int_f \Big(\pi_1(\jump{\Pi\bm u}_f \cdot \bm{n}_f )\Big)^2 \, ds + \int_f \Big(\pi_{RM^\perp(f)}(\jump{\Pi\bm u}_f\cdot \bm{t}_f)\Big)^2\, ds \\
&\leq \int_f \Big(\pi_1(\jump{\Pi \bm u-\bm u}_f\cdot \bm{n}_f )\Big)^2 \, ds +\int_f \Big(\pi_{RM^\perp(f)}(\jump{\Pi \bm u-\bm u}_f \cdot \bm{t}_f )\Big)^2 \, ds\\
&+ \int_f \Big(\pi_1(\jump{\bm u}_f \cdot \bm{n}_f )\Big)^2 \, ds +\int_f \Big(\pi_{RM^\perp(f)}(\jump{\bm u}_f \cdot \bm{t}_f )\Big)^2 \, ds.
\end{split}
\end{equation}
Let $\mathcal T_f = T^{+}\cup T^{-}$, it follows from \eqref{3.3}, \eqref{3.4} and trace theorem that 
\begin{equation}\label{3.11} 
\begin{aligned}
 &\int_f \Big(\pi_1(\jump{\Pi\bm u-\bm u}_f \cdot \bm{n}_f )\Big)^2 \, ds +\int_f \Big(\pi_{RM^\perp(f)}(\jump{\Pi\bm u-\bm u}_f \cdot \bm{t}_f )\Big)^2 ds \\ 
  \leq&\|\jump{\Pi\bm u-\bm u}\|^2_{0,f} \\
   \lesssim & \sum_{T \in \mathcal T_f} 
\Big((\hbox{diam}(T))|\bm u_T - \Pi_T\bm u_T|_{H^1(T)}^2 + (\hbox{diam}(T))^{-1}|\bm u_T-\Pi_T\bm u_T|_{0,T}^2\Big) \\
 \lesssim &\sum_{T \in \mathcal{T}_f} (\hbox{diam}(T))\|\mathcal D(\bm u_T)\|_{0,T}^2.
\end{aligned}
\end{equation}
Combining \eqref{3.9}  \eqref{3.10} and \eqref{3.11}, and noting that diameter of $T$ is equivalrent to diameter of $f$, we find
\begin{equation}\label{3.12} 
\begin{split}
&\sum_{f \in \mathcal{E}^o_h}(\hbox{diam}(f))^{d-2} \sum_{p \in  \mathcal{V}(f)}|\jump{\bm u}_f(p)|^2\\
&\lesssim \|\mathcal D_{\mathcal T}(\bm u)\|_{0}^2+\sum_{f \in \mathcal{E}^o_h} (\hbox{diam}(f))^{-1}\Big(\|\pi_1(\jump{\bm u}_f \cdot \bm{n}_f)\|_{0,f}+\|\pi_{RM^\perp(f)}(\jump{\bm u}_f \cdot \bm{t}_f)\|_{0,f}\Big).
\end{split}
\end{equation}
For $d=3$, we have that 
\begin{equation} 
\jump{\Pi \bm u}_f = -(\jump{\Pi\bm u}_f \times \bm{n}_f ) \times \bm{n}_f + (\jump{\Pi\bm u}_f \cdot \bm{n}_f ) \bm{n}_f. 
\end{equation} 
Furthermore, similar to the proof of Theorem \ref{kornpre}, we can show that 
\begin{equation} 
\jump{\Pi\bm u}_f \times \bm{n}_f = \bm{a} \times \bm{n}_f + c (\bm{x} - \bm{c}_f), \quad \forall \bm{x} \in f, 
\end{equation} 
where $\bm{c}_f$ is a barycenter of $f$.  Using the same argument for $2D$, we can establish the estimate for 3D case in Theorem \ref{main}. This completes the proof. 
\end{proof}
\begin{definition}
A subspace of rigid body mode will be denoted and defined by 
\begin{equation} 
\bm {RM}^{\partial}(\Omega) = \left \{\bm m \in \bm {RM}(\Omega): ~ \|\bm m\|_{L^2(\partial \Omega)}=1,\int_{\partial \Omega}\bm mds=0 \right \},
\end{equation} 
\end{definition} 
We shall set $\Phi(\bm u)$ as follows: 
\begin{equation}
\Phi(\bm u) : = \sup \limits_{\bm m \in \bm {RM}^{\partial} (\Omega)} \int_{\partial \Omega}\bm u \cdot \bm m \, ds. 
\end{equation} 
Under this setting, we can show that $\Phi$ satisfies the Aummptions (C1), (C2), (C3) and (C4). Additionally, we have 
\begin{equation} 
\Phi(\bm u) \leq \sum_{f \subset \partial \Omega}\Big(\|\pi_1(\jump{\bm u}_f \cdot \bm n_f)\|_{0,f}^2+\|\pi_{RM^\perp(f)}(\jump{\bm u}_f \cdot \bm t_f)\|_{0,f}^2\Big)~\hbox{for}~ d=2,
\end{equation} 
and 
\begin{equation} 
\Phi(\bm u)\leq \sum_{f \subset \partial \Omega}\Big(\|\pi_1(\jump{\bm u}_f\cdot \bm n_f)\|_{0,f}^2+\|\pi_{RM^\perp(f)}(f)(\jump{\bm u}_f \times \bm n_f)\|_{0,f}^2\Big)~\hbox{for}~ d=3.
\end{equation} 
%\end{definition}
Let $P_{1/0}(f) = P_1(f)/P_0(f)$ and $\pi_{1/0}$ be the orthogonal projection from $L^2(f)$ onto $P_{1/0}(f)$.
\begin{Corollary}\label{DisKorn2d}
For any $\bm u\in (H^1(\Omega, \mathcal{T}_h))^2$, and if $\bm u$ satisfies the following continuity conditions across the edges $f\in \mathcal{E}_h^o$
\begin{enumerate}
\item $\int_{f} \jump{\bm u}_f \cdot \bm n_f \, p \, ds = 0, \quad \forall p \in P_1(f)$;
\item $\int_{f} \jump{\bm u}_f \cdot \bm t_f ds = 0$;
\end{enumerate}
then the following estimate holds:
\begin{equation} 
|\bm u|_{H^1(\Omega,\mathcal{T}_h)}\lesssim \|\mathcal{D}_{\mathcal {T}}(\bm u)\|_0+\sup\limits_{\bm m\in \bm {RM}^\partial(\Omega)}\int_{\partial \Omega}\bm u\cdot \bm mds.
\end{equation}
\end{Corollary}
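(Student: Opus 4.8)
The plan is to read the desired bound off the two-dimensional case of Theorem \ref{main}, specialized to the functional $\Phi(\bm u)=\sup_{\bm m\in\bm{RM}^\partial(\Omega)}\int_{\partial\Omega}\bm u\cdot\bm m\,ds$ that has already been verified to satisfy Assumptions (C1)--(C4). With this $\Phi$, the right-hand side of Theorem \ref{main} consists of $\|\mathcal{D}_{\mathcal{T}}(\bm u)\|_0^2$, the term $(\Phi(\bm u))^2$, and an edge sum over $\mathcal{E}_h^o$ of a normal contribution built from $\pi_1(\jump{\bm u}_f\cdot\bm n_f)$ and a tangential contribution built from $\pi_{RM^\perp(f)}(\jump{\bm u}_f\cdot\bm t_f)$. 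Everything therefore reduces to showing that hypotheses (1) and (2) force each of these two projections to vanish on every interior edge.

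For the normal part, $\pi_1$ is by definition the $L^2(f)$-orthogonal projection onto $P_1(f)$; hypothesis (1) asserts precisely that $\jump{\bm u}_f\cdot\bm n_f$ is $L^2(f)$-orthogonal to all of $P_1(f)$, so $\pi_1(\jump{\bm u}_f\cdot\bm n_f)=0$ and the first edge term drops out. For the tangential part, the key observation is that in 2D Theorem \ref{kornpre} identifies $\bm{RM}^\perp(f)$ with $\hbox{span}\{1\}=P_0(f)$; consequently $\pi_{RM^\perp(f)}$ is nothing but the mean-value projection $g\mapsto\frac{1}{|f|}\int_f g\,ds$. Hypothesis (2) states that this average vanishes for $g=\jump{\bm u}_f\cdot\bm t_f$, whence $\pi_{RM^\perp(f)}(\jump{\bm u}_f\cdot\bm t_f)=0$ and the second edge term drops out as well.

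With both families of jump terms removed, Theorem \ref{main} collapses to $|\bm u|_{H^1(\Omega,\mathcal{T}_h)}^2\lesssim\|\mathcal{D}_{\mathcal{T}}(\bm u)\|_0^2+(\Phi(\bm u))^2$, and taking square roots (using $\sqrt{a^2+b^2}\le a+b$ for $a,b\ge 0$) together with the definition of $\Phi$ yields the claimed estimate. I expect no genuine obstacle here: the entire content is the bookkeeping that matches hypothesis (1) to the projection $\pi_1$ and hypothesis (2) to the projection $\pi_{RM^\perp(f)}$. The one point deserving care is the identification, via Theorem \ref{kornpre}, of $\pi_{RM^\perp(f)}$ in 2D as the averaging operator onto $P_0(f)$, since this is exactly what converts the single scalar condition (2) into the vanishing of the whole tangential jump contribution; and for the fact that the chosen $\Phi$ satisfies (C1)--(C4) I would simply cite the discussion preceding the corollary rather than reprove it.
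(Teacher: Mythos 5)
Your proposal is correct and takes essentially the same route as the paper, which simply declares the corollary ``immediate from Theorem \ref{main}''; your write-up just supplies the bookkeeping (hypothesis (1) kills $\pi_1(\jump{\bm u}_f\cdot\bm n_f)$, hypothesis (2) kills the mean-value projection $\pi_{RM^\perp(f)}(\jump{\bm u}_f\cdot\bm t_f)$, and the chosen $\Phi$ satisfies (C1)--(C4) by the preceding discussion). The only minor remark is that in 2D the paper defines $\pi_{RM^\perp(f)}$ directly as the $L^2$-projection onto $P_0(f)$, so the appeal to Theorem \ref{kornpre} for that identification, while harmless, is not strictly needed.
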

\begin{proof} 
This is immediate from Theorem \ref{main}. This completes the proof. 
\end{proof} 

\begin{Corollary}\label{DisKorn3d}
For any $\bm u \in (H^1(\Omega, \mathcal{T}_h))^3$, and if $\bm u$ satisfies the following continuity conditions across the edges $f \in \mathcal{E}_h^o$
\begin{enumerate}
\item $\int_{f} \jump{\bm u}_f \cdot \bm n_f \, p \, ds = 0, \quad \forall p \in P_1(f)$;
\item $\int_{f} (\jump{\bm u}_f \times \bm n_f ) 
\cdot \bm {p}\, ds = 0, \quad \forall \bm{p} \in 
{\rm RT}_0(f)$;
\end{enumerate}
then the following estimate holds:
\begin{equation} 
|\bm u|_{H^1(\Omega,\mathcal{T}_h)}\lesssim \|\mathcal{D}_{\mathcal {T}}(\bm u)\|_0+\sup\limits_{\bm m\in \bm {RM}^\partial(\Omega)}\int_{\partial \Omega}\bm u\cdot \bm m\, ds.
\end{equation}
\end{Corollary}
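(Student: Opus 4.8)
The plan is to obtain this as an immediate consequence of the 3D case of Theorem~\ref{main}, in complete analogy with the derivation of Corollary~\ref{DisKorn2d} from the 2D case. The entire content of the argument is to recognize that the two imposed continuity conditions are exactly the assertions that the two edge-jump projection terms in the 3D bound of Theorem~\ref{main} vanish on every interior face $f\in\mathcal{E}_h^o$.

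First I would parse condition~(1). Since $\pi_1$ is by definition the $L^2(f)$-orthogonal projection onto $P_1(f)$, the requirement $\int_f \jump{\bm u}_f\cdot\bm n_f\, p\, ds = 0$ for all $p\in P_1(f)$ is equivalent to $\pi_1(\jump{\bm u}_f\cdot\bm n_f)=0$, so the first term $\|[\pi_1(\jump{\bm u}_f\cdot\bm n_f)]\bm n_f\|_{0,f}^2$ is zero. Next I would parse condition~(2). For $d=3$ the projection $\pi_{RM^\perp(f)}$ is, by definition, the $L^2(f)$-orthogonal projection onto ${\rm RT}_0(f)$, the space characterized in Theorem~\ref{kornpre} as the trace of $(\bm v\times\bm n_f)|_f$ over $\bm v\in\bm{RM}(T)$. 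Hence $\int_f (\jump{\bm u}_f\times\bm n_f)\cdot\bm p\, ds = 0$ for all $\bm p\in{\rm RT}_0(f)$ is precisely $\pi_{RM^\perp(f)}(\jump{\bm u}_f\times\bm n_f)=0$, which kills the second term $\|[\pi_{RM^\perp(f)}(\jump{\bm u}_f\times\bm n_f)]\times\bm n_f\|_{0,f}^2$. Consequently the whole sum over $\mathcal{E}_h^o$ drops out of the 3D estimate.

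What remains is $|\bm u|^2_{H^1(\Omega,\mathcal{T}_h)}\lesssim \|\mathcal{D}_{\mathcal{T}}(\bm u)\|_0^2+(\Phi(\bm u))^2$. Taking $\Phi(\bm u)=\sup_{\bm m\in\bm{RM}^\partial(\Omega)}\int_{\partial\Omega}\bm u\cdot\bm m\, ds$ as chosen in the text, which is asserted there to satisfy (C1)--(C4), and using $\sqrt{a^2+b^2}\lesssim a+b$ for nonnegative $a,b$, I would take square roots to recover the stated inequality. I do not expect a genuine obstacle here: the proof is immediate from Theorem~\ref{main}, and all the substantive work is already carried out, namely the reduction of the jump to its $P_1(f)$ and ${\rm RT}_0(f)$ components in Theorem~\ref{main} together with the identification of ${\rm RT}_0(f)$ as the tangential rigid-motion trace space in Theorem~\ref{kornpre}. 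The only point demanding care is to confirm that the chosen $\Phi$ indeed verifies (C4) on piecewise $H^1$ fields, since the preceding Remark warns that (C4) does not in general follow from (C1) in that setting; however, this is precisely what is granted by the discussion establishing (C1)--(C4) for this $\Phi$.
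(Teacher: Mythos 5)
Your proposal is correct and follows exactly the paper's route: the paper also deduces the corollary immediately from Theorem~\ref{main}, with conditions (1) and (2) interpreted as the vanishing of the $\pi_1$ and $\pi_{RM^\perp(f)}$ projection terms and with $\Phi(\bm u)=\sup_{\bm m\in\bm{RM}^\partial(\Omega)}\int_{\partial\Omega}\bm u\cdot\bm m\,ds$ as chosen in the text. Your write-up merely makes explicit the identifications the paper leaves implicit, including the appeal to (C1)--(C4) for this choice of $\Phi$.
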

\begin{proof} 
This is immediate from Theorem \ref{main}. This completes the proof. 
\end{proof} 

\section{On sharpness of the Korn's inequality}\label{sharp}

In this subsection, we establish that the proposed Korn's inequality is sharp. We first start with 2D case and move on to 3D case. The goal is to establish that the proposed Korn's inequality is sharp, see Corollary \ref{DisKorn2d} and Corollary \ref{DisKorn3d}. More precisely, the above continuity conditions in Corollary \ref{DisKorn2d} and Corollary \ref{DisKorn3d} minimize the conditions to obtain the classical Korn's inequality for piecewise $H^1$ space. To put it in another way, if one of the conditions is violated, the classical Korn's inequality does not hold, but the inequality can be made to hold by adding an appropriate jump terms. 

For a sharpness proof, we shall take examples showing that if any one of the conditions is missing, then the inequality does not hold. These examples are constructed in special 2D and 3D domains as given in Figure \ref{ex}. 
\begin{figure}[h] 
\includegraphics[width=12cm, height=5cm]{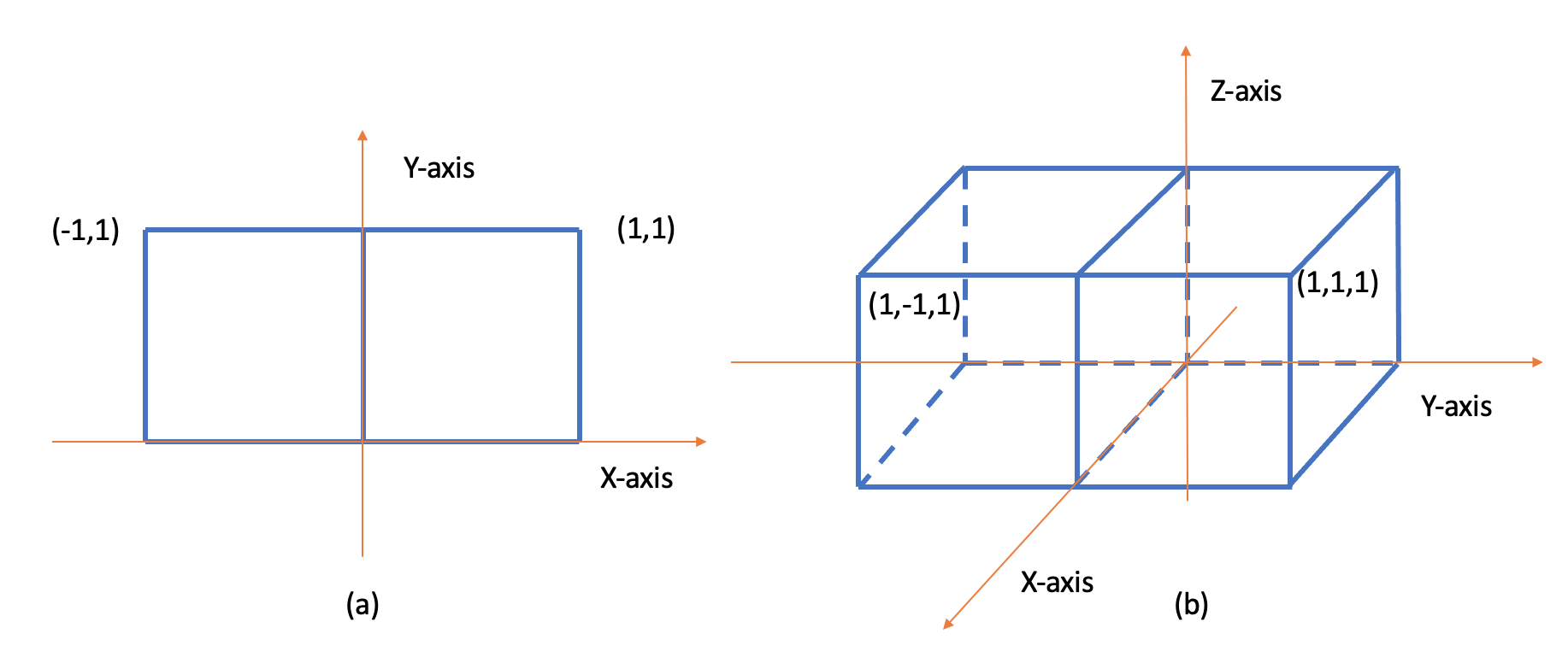}
\caption{Domain 2D and 3D}\label{ex} 
\end{figure} 
For simplicity, we shall divide our discussion for 2D and 3D. For 2D, we consider a special $\Omega=\mathcal T_h = T_1 \cup T_2$, which consists of the vertexes $(-1,0), (0,0), (0,1),(-1,1)$ and $ (0,0),(1,0), (1,1),(0,1)$. We denote $f = T_1 \cap T_2$ and 
\begin{subeqnarray*} 
E_1 &=& \{\bm u\in (H^1(\Omega, \mathcal{T}_h))^2: \pi_{1/0}(\jump{\bm u}_f \cdot \bm n_f)=0~\hbox{and}~\pi_0(\jump{\bm u}_f \cdot \bm t_f)=0\}, \\
E_2 &=& \{\bm u\in (H^1(\Omega, \mathcal{T}_h))^2: \pi_0(\jump{\bm u}_f \cdot \bm n_f)=0~\hbox{and}~\pi_0(\jump{\bm u}_f \cdot \bm t_f)=0\}, \\ 
E_3 &=& \{\bm u \in (H^1(\Omega, \mathcal{T}_h))^2: \pi_0(\jump{\bm u}_f \cdot \bm n_f)=0~\hbox{and}~\pi_{1/0}(\jump{\bm u}_f \cdot \bm n_f)=0\}.
\end{subeqnarray*} 
We first note that a simple calculation leads that 
\begin{equation}\label{RM0} 
\bm {RM}^\partial(\Omega) \subset \hbox{span}\{(1-2y, 2x)^t\}. 
\end{equation} 
Furthermore, we shall note that the definitions of $\pi_0$ and $\pi_{1/0}$ mean 
\begin{eqnarray}
\pi_0 (u) &=& \frac{1}{|f|} \int_f u ds \quad \mbox{ and } \quad 
\pi_{1/0}(u) = \frac{1}{|f|} \int_f u s ds. 
\end{eqnarray} 
\begin{Theorem}
There exists $\bm u\in E_k,$ for $k=1,2,$ or $3$, such that 
 \begin{equation}\label{notkorn1} 
 |\bm u|^2_{H^1(\Omega,\mathcal{T}_h)} \neq 0, \quad \mbox{ but }  \quad 
 \|\mathcal{D}_{\mathcal {T}}(\bm u)\|_0^2 + \left ( \sup\limits_{\bm m\in \bm {RM}^\partial (\Omega)}\int_{\partial \Omega}\bm u\cdot \bm mds \right )^2=0.
 \end{equation} 
\end{Theorem}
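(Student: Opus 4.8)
The plan is to argue by explicit construction. Since I want the right-hand side of \eqref{notkorn1} to vanish while the left-hand side does not, I will look for $\bm u$ that is \emph{piecewise a rigid motion}: writing $\bm u|_{T_i} = \bm a_i + b_i(y,-x)^T$ with $\bm a_i = (a_{i,1},a_{i,2})^T \in \Reals{2}$ and $b_i \in \Reals{}$ for $i = 1,2$, the identity $\mathcal{D}(\bm u|_{T_i}) = 0$ gives $\|\mathcal{D}_{\mathcal{T}}(\bm u)\|_0 = 0$ for free, while $|\bm u|^2_{H^1(T_i)} = 2b_i^2|T_i|$ shows that the left-hand side is nonzero exactly when some $b_i \neq 0$. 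Thus the whole point is to keep a genuine rotation alive; a merely discontinuous piecewise-constant field would fail, since it has zero piecewise $H^1$ seminorm.

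Next I would turn the three jump quantities into linear data. On $f = \{0\}\times[0,1]$ with $\bm n_f = (1,0)^T$ and $\bm t_f = (0,1)^T$ the jump is affine in $y$, namely $\jump{\bm u}_f\cdot\bm n_f = (a_{1,1}-a_{2,1}) + (b_1-b_2)y$ and $\jump{\bm u}_f\cdot\bm t_f = a_{1,2}-a_{2,2}$. Consequently $\pi_0(\jump{\bm u}_f\cdot\bm n_f)$, $\pi_{1/0}(\jump{\bm u}_f\cdot\bm n_f)$ and $\pi_0(\jump{\bm u}_f\cdot\bm t_f)$ are explicit linear combinations of the six parameters, so membership in each $E_k$ reduces to its two retained linear equations. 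For the boundary term I use \eqref{RM0}: the supremum vanishes if and only if $\int_{\partial\Omega}\bm u\cdot(1-2y,2x)^T\,ds = 0$, and a direct edgewise integration over $\partial([-1,1]\times[0,1])$ collapses this to a single linear relation of the shape $4(a_{2,2}-a_{1,2}) - \frac{9}{2}(b_1+b_2) = 0$, in which the pure-translation parameters $a_{i,1}$ drop out.

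With this bookkeeping the theorem becomes a feasibility question: for the chosen $k$, does the linear system consisting of the two $E_k$-constraints together with the boundary relation admit a solution with some $b_i \neq 0$? For $k = 2$ it does, and a clean witness is $\bm u|_{T_1} = (y,-x)^T$, $\bm u|_{T_2} = (1-y,x)^T$: the interface jump is $(2y-1,0)^T$, so $\pi_0(\jump{\bm u}_f\cdot\bm n_f) = 0$ and $\pi_0(\jump{\bm u}_f\cdot\bm t_f) = 0$ put $\bm u \in E_2$, the boundary relation holds since $b_1+b_2 = 0$, and $b_1 = 1 \neq 0$ gives $|\bm u|^2_{H^1(\Omega,\mathcal{T}_h)} = 4 \neq 0$. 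The case $k = 3$ is entirely analogous, superposing a piecewise rotation with a constant tangential offset chosen to balance the rotation's boundary moment.

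The main obstacle — and the only part that is not routine — is precisely this compatibility. A nonzero rotation $b_i$ is forced by $|\bm u|^2_{H^1}\neq 0$, and it inevitably contributes to $\int_{\partial\Omega}\bm u\cdot(1-2y,2x)^T\,ds$; that contribution can be annihilated only if the jump degree of freedom left \emph{unconstrained} in $E_k$ is rich enough to generate a compensating boundary moment. Verifying that the retained constraints of $E_k$ still leave room for a nonzero rotation — equivalently, that the relaxed jump term can absorb the rotation's boundary contribution — is the crux, and it is what must be checked for the index $k$ one selects.
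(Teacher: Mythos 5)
Your construction is essentially the paper's own: piecewise rigid motions $\bm u|_{T_i}=\bm a_i+b_i(y,-x)^T$, which make $\|\mathcal{D}_{\mathcal T}(\bm u)\|_0=0$ identically, reduce membership in $E_k$ to linear constraints on the six parameters, and collapse the boundary term to the single relation $4(a_{2,2}-a_{1,2})-\tfrac{9}{2}(b_1+b_2)=0$; your edgewise integration is correct and agrees, up to the sign convention for the rotation, with the paper's \eqref{cal}. Your $k=2$ witness checks out completely: $\bm u|_{T_1}=(y,-x)^T$, $\bm u|_{T_2}=(1-y,x)^T$ has interface jump $(2y-1,0)^T$, hence lies in $E_2$, the boundary relation holds because $b_1+b_2=0$, and $|\bm u|^2_{H^1(\Omega,\mathcal T_h)}=4\neq 0$. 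Your sketch for $k=3$ is also sound: the retained conditions of $E_3$ force the normal jump to vanish, hence $b_1=b_2$, and the rotation's boundary moment $-9b_1$ is absorbed by the unconstrained tangential offset $a_{2,2}-a_{1,2}=\tfrac94 b_1$. (In fact this is cleaner than the paper's own text for $k=3$, whose stated choice $c_1=-c_2$ is inconsistent with its requirement $\overline{c}=0$; the correct choice is equal rotations balanced by the tangential jump, exactly as you describe.)

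The gap is one of coverage, not of method. The theorem is a sharpness statement, and the paper proves it for \emph{each} of $k=1,2,3$: dropping any one of the three jump conditions must yield a counterexample. You fully verify only $k=2$, sketch $k=3$, and never treat $k=1$; your closing remark that the crux ``must be checked for the index $k$ one selects'' reads the statement disjunctively, which would only show that \emph{some} condition is necessary and would not establish sharpness of all three. The missing case is immediate inside your own bookkeeping: for $E_1$ the retained constraints are $\tfrac12(a_{1,1}-a_{2,1})+\tfrac13(b_1-b_2)=0$ (vanishing first moment of the normal jump) and $a_{1,2}=a_{2,2}$ (vanishing mean tangential jump), after which the boundary relation reduces to $b_1+b_2=0$. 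Taking $b_1=1=-b_2$, $a_{1,2}=a_{2,2}=0$, and $a_{1,1}-a_{2,1}=-\tfrac43$ gives $\bm u\in E_1$ with $|\bm u|^2_{H^1(\Omega,\mathcal T_h)}=4\neq 0$ while the right-hand side of \eqref{notkorn1} vanishes, and only the mean of the normal jump (the condition dropped in $E_1$) fails. Add this case, and write out $k=3$ explicitly, and your argument matches the paper's proof in full.
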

\begin{proof}
Let us consider $\bm u$ such that $\bm u_i = \bm u|_{T_i}$ for $i = 1,2$ given as follows:  
\begin{equation}\label{form} 
\bm u_i = \left ( \begin{array}{c} a_i \\ b_i \end{array} \right ) + c_i\left ( \begin{array}{c} -y \\ x \end{array} \right ) \in \bm {RM}(T_i), \quad\forall i=1,2. 
\end{equation}
Here six coefficients $(a_i,b_i,c_i)_{i=1,2}$ shall be appropriately determined so that the Korn's inequality does not hold. We observe that 
$\|\mathcal{D}_{\mathcal {T}}(\bm u)\|_0 = 0$ by construction for any choice of coefficients. We first note that from \eqref{RM0}, i.e., $\bm {RM}^\partial(\Omega) \subset\hbox{span}\{(1-2y, 2x)^t\}$, we can calculate that with $\bm m = c(1 - 2y, 2x)^t$ and $c \neq 0$, 
\begin{equation}\label{cal} 
\int_{\partial \Omega}\bm u\cdot \bm m \, ds = c\left ( 4 (b_2 - b_1) + \frac{9}{2}(c_1 + c_2) \right ).
\end{equation}
We also observe that with $\overline{n} = n_2 - n_1$, 
\begin{eqnarray*}
\jump{\bm{u}}_f \cdot \bm{n}_f &=&  \overline{a} - \overline{c}y \\ 
\jump{\bm{u}}_f \cdot \bm{t}_f &=& \overline{b} + \overline{c} x. 
\end{eqnarray*} 
A simple computation leads that
\begin{eqnarray*}
\int_f \jump{\bm{u}}_f \cdot \bm{n}_f ds &=& \int_0^1 \overline{a} - \overline{c} y \, dy = \overline{a} - \frac{1}{2} \overline{c} \\
\int_f (\jump{\bm{u}}_f \cdot \bm{n}_f) y ds &=& \int_0^1 \overline{a}y - \overline{c} y^2 \, dy =  \frac{1}{2} \overline{a} - \frac{1}{3} \overline{c} \\ 
\int_f \jump{\bm{u}}_f \cdot \bm{t}_f \, ds &=& \int_0^1 \overline{b} \, dy = \overline{b}. 
\end{eqnarray*} 
Now, we begin our search of $\bm{u}$ of the aforementioned form \eqref{form} that belongs to $E_k$, which satisfies \eqref{notkorn1} for all $k=1,2,$ or $3$. This shall be discussed case by case as follows. 
\begin{itemize}[leftmargin=*]
\item For $k=1$, we shall choose $\overline{a} = \frac{2}{3} \overline{c} \neq 0$ and $\overline{b} = 0$. This way, we can make $\bm u \in E_1$. On the other hand, for \eqref{cal}, we must choose $c_1 + c_2 = 0$. Then, $\bm u$ satisfies \eqref{notkorn1}. 
\item For $k=2$, we shall choose $\overline{a} = \frac{1}{2} \overline{c} \neq 0$. Then by choosing $\overline{b} = 0$, we can make $\bm u \in E_2$. Again, we choose $c_1 + c_2 = 0$ for \eqref{cal}. Namely, $c_1 = c/2$ and $c_2 = -c/2$ for arbitrary $c \neq 0$ to guarantee the above conditions. We note that for $c \neq 0$, $\bm u$ satisfies \eqref{notkorn1}. 
\item For $k=3$, we shall choose $\overline{a} = \overline{c} = 0$. Then by choosing $\overline{b} \neq 0$, we can make $\bm u \in E_3$. On the other hand, for \eqref{cal}, we choose $c_1 =- c_2 = c/2\neq 0$ so that it holds, we see that since $c \neq 0$, $\bm u$ satisfies \eqref{notkorn1}. 
\end{itemize} 
This completes the proof.
\end{proof}
We shall now turn our attention to three dimensional case. For 3D case, we consider two cubes as given in Figure \ref{ex} (b). Namely, $\Omega=\mathcal T_h = T_1 \cup T_2$, whose coordinates are $(1, -1, 0), (1, 0, 0)$,  $(0, 0, 0),(0, -1, 0),(1, -1, 1), (1, 0, 1),(0, 0, 1),(0, -1, 1)$ 
and $(1, 0, 0), (1, 1, 0),(0, 1, 0)$, $(0, 0, 0), (1, 0, 1), (1, 1, 1),(0, 1, 1),(0, 0, 1)$.  We denote $f = T_1\cap T_2$ by the face and expand the two conditions in Corollary \ref{DisKorn3d} as following six conditions 
\begin{enumerate}
\item [(A1)]   $\int_{f} \jump{\bm u}_f \cdot \bm n_f ds = 0$;
\item [(A2)]   $\int_{f} \jump{\bm u}_f \cdot \bm n_f x ds = 0$;
\item [(A3)]   $\int_{f} \jump{\bm u}_f \cdot \bm n_f z ds = 0$;
\item  [(A4)]   $\int_{f} (\jump{\bm u}_f \times \bm n_f) \cdot (0,0,1)^T ds = 0$;
\item  [(A5)]   $\int_{f} (\jump{\bm u}_f \times \bm n_f) \cdot (1,0,0)^T ds = 0$;
\item  [(A6)]   $\int_{f} (\jump{\bm u}_f \times \bm n_f) \cdot (x,0,z)^T ds = 0$.
\end{enumerate}
We now list a total of six subsets of $(H^1(\Omega;\mathcal{T}_h))^3$ as follows: 
\begin{subeqnarray*} 
F_1 &=& \{\bm u\in (H^1(\Omega, \mathcal{T}_h))^3: \bm u ~~ \hbox{satisfies (A2), (A3), (A4), (A5), and (A6)} \} \\
F_2 &=& \{\bm u\in (H^1(\Omega, \mathcal{T}_h))^3: \bm u ~~ \hbox{satisfies (A1), (A3), (A4), (A5), and (A6)} \} \\
F_3 &=& \{\bm u\in (H^1(\Omega, \mathcal{T}_h))^3: \bm u ~~ \hbox{satisfies (A1), (A2), (A4), (A5), and (A6)} \} \\
F_4 &=& \{\bm u\in (H^1(\Omega, \mathcal{T}_h))^3: \bm u ~~ \hbox{satisfies (A1), (A2), (A3), (A5), and (A6)} \} \\
F_5 &=& \{\bm u\in (H^1(\Omega, \mathcal{T}_h))^3: \bm u ~~ \hbox{satisfies (A1), (A2), (A3), (A4), and (A6)} \} \\
F_6 &=& \{\bm u\in (H^1(\Omega, \mathcal{T}_h))^3: \bm u ~~ \hbox{satisfies (A1), (A2), (A3), (A4), and (A5)} \}. 
\end{subeqnarray*} 
We first establish that with $\Omega = T_1\cup T_2$, the space of $\bm {RM}^\partial(\Omega)$. A tedious but simple calculation shows that 
\begin{equation}\label{RM1} 
\bm{RM}^\partial(\Omega) \subset \hbox{span}\{\bm m_1, \bm m_2, \bm m_3\}, 
\end{equation} 
where $\bm m_i$ with $i=1,2,3$ are given as follows: 
\begin{eqnarray*}
\bm m_1 = \left ( \begin{array}{c} -1 + 2z \\ 0 \\ 1 - 2x \end{array} \right ), \,\, \bm m_2 = \left ( \begin{array}{c} 2y \\ 1 - 2x \\ 0 \end{array} \right ), \,\, \mbox{ and } \,\, \bm m_3 = \left (\begin{array}{c} y \\ -x + z \\ -y \end{array} \right ).  
\end{eqnarray*}
%
%$$
%\bm m_1=
% \begin{pmatrix}
%-1+2z\\
%0\\
%1-2x
%\end{pmatrix},
%\bm m_2=
%\begin{pmatrix}
%2y\\
%1-2x\\
%0
%\end{pmatrix},
%\bm m_3=
%\begin{pmatrix}
%0\\
%-1+2z\\
%-2y
%\end{pmatrix},
%$$%then $\bm {RM}^\partial(\Omega) \subset\hbox{span}\{\bm m_1, \bm %m_2, \bm m_3\}$
We shall now state and prove the main result in this section. 
\begin{Theorem}
There exists $\bm u \in F_k,$ for $k=1,2,3,4,5$ or $6$, such that 
\begin{equation}\label{notkorn2} 
|\bm u|^2_{H^1(\Omega,\mathcal{T}_h)} \neq 0, \quad \mbox{ but }  \quad 
 \|\mathcal{D}_{\mathcal {T}}(\bm u)\|_0^2 + \left ( \sup\limits_{\bm m\in \bm {RM}^\partial (\Omega)}\int_{\partial \Omega}\bm u\cdot \bm mds \right )^2=0.
\end{equation} 
\end{Theorem}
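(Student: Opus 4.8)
The plan is to mirror the two–dimensional argument and search for a counterexample inside the space of piecewise rigid motions, so that the strain term vanishes identically and only the jump across $f$ and the boundary functional remain to be controlled. Write $\bm u|_{T_i} = \bm a_i + \bm A_i \bm x$ with $\bm A_i \in \textsf{Skw}^{3\times 3}$ for $i=1,2$; then $\mathcal{D}_{\mathcal T}(\bm u)=0$ automatically, and since $\nabla \bm u = \bm A_i$ is constant on $T_i$ we have $|\bm u|^2_{H^1(\Omega,\mathcal T_h)} = |\bm A_1|^2 |T_1| + |\bm A_2|^2 |T_2|$. Thus $|\bm u|_{H^1(\Omega,\mathcal T_h)}\neq 0$ is equivalent to $\bm A_1 \neq 0$ or $\bm A_2 \neq 0$. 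The interface is $f = \{(x,0,z): 0\le x,z\le 1\}$ with $\bm n_f$ in the $y$–direction, so on $f$ the jump is $\jump{\bm u}_f = \overline{\bm a} + \overline{\bm A}\bm x$, with $\overline{\bm a} = \bm a_2 - \bm a_1$ and $\overline{\bm A} = \bm A_2 - \bm A_1$; I denote the independent skew entries $\alpha = \overline{\bm A}_{12}$, $\beta = \overline{\bm A}_{13}$, $\gamma = \overline{\bm A}_{23}$.

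First I would substitute $\bm x = (x,0,z)$ and evaluate (A1)--(A6) explicitly. By Theorem \ref{kornpre} the normal jump $\jump{\bm u}_f \cdot \bm n_f$ lies in $P_1(f)$ and the tangential jump $\jump{\bm u}_f \times \bm n_f$ lies in ${\rm RT}_0(f)$, so the six functionals test these two jump components against complete bases of $P_1(f) = \mathrm{span}\{1,x,z\}$ and ${\rm RT}_0(f) = \mathrm{span}\{(1,0,0),(0,0,1),(x,0,z)\}$; consequently all six conditions hold if and only if the jump vanishes identically on $f$. The key structural observation is that the system decouples cleanly: (A1)--(A3) are three linear equations in the normal data $(\overline a_2,\alpha,\gamma)$, while (A4)--(A6) are three linear equations in the tangential data $(\overline a_1,\overline a_3,\beta)$.

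For each $k$ I would retain the five conditions indexed by $j\neq k$, solve the resulting (decoupled) $2\times 3$ system, and read off the value of the dropped functional. A direct computation in each of the six cases shows that the dropped condition equals a nonzero multiple of one of the skew entries $\alpha$, $\beta$, or $\gamma$ of $\overline{\bm A}$; hence enforcing the five retained conditions while keeping the jump nonzero forces $\overline{\bm A}\neq 0$ (this violation is in any case unavoidable, since by Corollary \ref{DisKorn3d} all six conditions together with a vanishing boundary term would force $|\bm u|_{H^1(\Omega,\mathcal T_h)}=0$). This is precisely what is needed: $\overline{\bm A}\neq 0$ means $\bm A_1 \neq \bm A_2$, so the two rotations cannot both vanish and $|\bm u|_{H^1(\Omega,\mathcal T_h)}\neq 0$. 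Checking that \emph{every} dropped condition injects a nonzero component into the skew part $\overline{\bm A}$, rather than merely into the constant part $\overline{\bm a}$, is the conceptual crux, because a jump supported only on $\overline{\bm a}$ would be piecewise constant, have zero gradient, and thus fail to give a genuine counterexample.

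It remains to annihilate the boundary functional. Having fixed the jump $(\overline{\bm a},\overline{\bm A})$, the residual freedom is the common rigid motion $\bm r = \bm a_1 + \bm A_1\bm x$ added to both pieces, which leaves the jump, the strain, and the difference $\overline{\bm A}$ untouched. By \eqref{RM1} we have $\bm{RM}^\partial(\Omega)\subset \mathrm{span}\{\bm m_1,\bm m_2,\bm m_3\}$, so the supremum vanishes as soon as $\int_{\partial\Omega}\bm u\cdot\bm m_i = 0$ for $i=1,2,3$. Writing $\int_{\partial\Omega}\bm u\cdot\bm m_i = \ell_i^{(0)} + \int_{\partial\Omega}\bm r\cdot\bm m_i$, where $\ell_i^{(0)}$ is the contribution of the fixed jump configuration, I would solve for $\bm r\in\mathrm{span}\{\bm m_1,\bm m_2,\bm m_3\}$: the Gram matrix $G_{ij} = \int_{\partial\Omega}\bm m_i\cdot\bm m_j$ is symmetric positive definite, since the $\bm m_i$ are linearly independent rigid motions and a nonzero rigid motion cannot vanish $L^2(\partial\Omega)$–almost everywhere, so the three conditions are uniquely solvable. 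The resulting $\bm u$ then has $\|\mathcal{D}_{\mathcal T}(\bm u)\|_0 = 0$ and $\sup_{\bm m\in\bm{RM}^\partial(\Omega)}\int_{\partial\Omega}\bm u\cdot\bm m = 0$, while $|\bm u|_{H^1(\Omega,\mathcal T_h)}\neq 0$, which establishes \eqref{notkorn2}. The main obstacle is the bookkeeping of the two decoupled $2\times 3$ systems together with the verification that in all six cases the violated functional is genuinely a rotational (skew) degree of freedom; the subsequent boundary adjustment is routine given the nondegeneracy of $G$.
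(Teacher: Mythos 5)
Your proposal is correct, and its core construction is the same as the paper's: work inside piecewise rigid motions so that $\mathcal{D}_{\mathcal T}(\bm u)=0$ automatically, evaluate the six functionals (A1)--(A6) on the jump across $f$, observe that they decouple into a normal block in $(\overline{b},\overline{d},\overline{f})$ and a tangential block in $(\overline{a},\overline{c},\overline{e})$, and for each $k$ pick jump data in the one-dimensional kernel of the five retained conditions, noting that this kernel always carries a nonzero skew entry so that $\bm A_1\neq \bm A_2$ and hence $|\bm u|_{H^1(\Omega,\mathcal T_h)}\neq 0$. Where you genuinely differ is the treatment of the boundary functional. The paper computes $\int_{\partial\Omega}\bm u\cdot \bm m_i$ explicitly in terms of all twelve coefficients (equations \eqref{rot}) and then hand-tunes sums like $d_1+d_2$, $e_1+e_2$, $f_1+f_2$ case by case; you instead add a common rigid motion $\bm r\in\mathrm{span}\{\bm m_1,\bm m_2,\bm m_3\}$ --- which leaves the jump, the strain, and $\overline{\bm A}$ untouched --- and annihilate the three boundary integrals by solving the $3\times 3$ Gram system, which is invertible because a nonzero rigid motion cannot vanish a.e.\ on the two-dimensional surface $\partial\Omega$. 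This buys two things: no explicit boundary integration is needed at all, and the argument is uniform in $k$, hence more robust than hand-picked constants. Indeed, your systematic ``solve the retained system, then evaluate the dropped functional on the kernel'' procedure exposes arithmetic slips in the paper's stated choices: for $k=4$ the paper takes $\overline{a}=-\tfrac16\overline{e}$, and for $k=5$ it takes $\overline{c}=\tfrac16\overline{e}$, neither of which satisfies (A6); solving the retained equations as you propose gives $\overline{a}=-\tfrac56\overline{e}$ (for $k=4$), resp.\ $\overline{c}=\tfrac56\overline{e}$ (for $k=5$), with the dropped functional then equal to $-\tfrac13\overline{e}\neq 0$ in both cases, so the theorem's conclusion is unaffected.

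One small caveat: your parenthetical claim that a jump supported only on $\overline{\bm a}$ ``would be piecewise constant, have zero gradient, and thus fail to give a genuine counterexample'' is too strong as stated, since with $\overline{\bm A}=0$ one could still take $\bm A_1=\bm A_2\neq 0$ and get $|\bm u|_{H^1(\Omega,\mathcal T_h)}\neq 0$. What is true is the weaker statement that with $\overline{\bm A}=0$ the nonvanishing of the seminorm is no longer guaranteed after the boundary adjustment (the solved $\bm r$ might turn out to be a pure translation), whereas $\overline{\bm A}\neq 0$ makes it automatic regardless of $\bm r$. Since your construction does secure $\overline{\bm A}\neq 0$ in all six cases, this imprecision does not affect the validity of the proof.
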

\begin{proof} 
Let us consider $\bm u$ such that $\bm u_i = \bm u|_{T_i}$ for $i=1,2$ given as follows: 
\begin{equation}\label{form} 
\bm u_i = \left ( \begin{array}{c} a_i \\ b_i \\ c_i \end{array} \right ) + \left ( \begin{array}{ccc} 0 & d_i & e_i \\ -d_i & 0 & f_i \\ -e_i & -f_i & 0 \end{array} \right ) \left ( \begin{array}{c} x \\ y \\ z \end{array} \right ) \in \bm {RM}(T_i), \quad\forall i=1,2. 
\end{equation}
Here twelve coefficients $(a_i,b_i,c_i,d_i,e_i,f_i)_{i=1,2}$ shall be appropriately determined so that the Korn's inequality does not hold under the condition that $\bm u \in F_k$ for any $k$. We observe that 
$\|\mathcal{D}_{\mathcal {T}}(\bm u)\|_0 = 0$ by construction for any choice of coefficients. We first investigate what constraints are given from the conditions:  
\begin{equation}
\int_{\partial \Omega} \bm u \cdot \bm m_i \, ds = 0, \quad \forall i=1,2,3,  
\end{equation} 
where $\bm {RM}^\partial(\Omega) \subset\hbox{span}\{ \bm m_1, \bm m_2, \bm m_3\}$. We observe that 
\begin{eqnarray*}
\bm u \cdot \bm m_1|_{T_i} &=& (a_i + d_i y + e_i z) ( -1 + 2z ) + (c_i - e_i x - f_i y) (1 - 2x); \\
\bm u \cdot \bm m_2|_{T_i} &=& (a_i + d_i y + e_i z) ( 2y ) + (b_i - d_i x + f_i z) ( 1 - 2x ); \\ 
\bm u \cdot \bm m_3|_{T_i} &=& (a_i + d_i y + e_i z) ( y ) + 
(b_i - d_i x + f_i z) ( -x + z ) + (c_i - e_i x - f_i y) (- y).
\end{eqnarray*} 
A simple but tedious computation leads that 
\begin{subeqnarray}\label{rot} 
\int_{\partial \Omega} \bm u \cdot \bm m_1 \, ds &=& 
\left ( \frac{9}{3} (e_1 + e_2)  \right ) ;\\ 
\int_{\partial \Omega} \bm u \cdot \bm m_2 \, ds &=& 6(a_2 - a_1) + 3(e_2  - e_1) + \frac{37}{6}(d_1 + d_2) \slabel{eq2};\\ 
\int_{\partial \Omega} \bm u \cdot \bm m_3 \, ds &=& 3(a_2 - a_1)  - 3(c_2 - c_1) + 3 (e_2 - e_1) \slabel{eq3} \\
&& + \frac{37}{12} \left [ (d_2 + d_1)  + (f_2 + f_1) \right ]. \nonumber    
\end{subeqnarray} 
We also observe that with $\overline{n} = n_2 - n_1$, 
\begin{eqnarray*}
\jump{\bm{u}}_f \cdot \bm{n}_f &=&  \overline{b} - \overline{d} x + \overline{f}z; \\ 
\jump{\bm{u}}_f \times \bm{n}_f &=& (- (\overline{c} - \overline{e} x - \overline{f} y), 0, \overline{a} + \overline{d} y + \overline{e} z)^t|_f = (- (\overline{c} - \overline{e} x), 0, \overline{a} + \overline{e} z)^t.
\end{eqnarray*} 
A simple computation leads that
\begin{eqnarray*}
\int_f \jump{\bm{u}}_f \cdot \bm{n}_f ds &=& \int_0^1 \int_0^1 \overline{b} - \overline{d} x + \overline{f} z \, dxdz = \overline{b} - \frac{1}{2} \overline{d} + \frac{1}{2} \overline{f}; \\ 
\int_f \jump{\bm{u}}_f \cdot \bm{n}_f x ds &=& \int_0^1 \int_0^1 \overline{b}x  - \overline{d} x^2 + \overline{f} xz \, dxdz =  \frac{1}{2} \overline{b} - \frac{1}{3} \overline{d} + \frac{1}{4} \overline{f}; \\
\int_f \jump{\bm{u}}_f \cdot \bm{n}_f z ds &=& \int_0^1 \int_0^1 \overline{b} z - \overline{d} xz + \overline{f} z^2 \, dxdz = \frac{1}{2} \overline{b} - \frac{1}{4} \overline{d} + \frac{1}{3} \overline{f}.
\end{eqnarray*} 
Furthermore, we have that
\begin{eqnarray*}
\int_f [\jump{\bm{u}}_f \times \bm{n}_f ] \cdot (0,0,1)^T ds &=& \int_0^1 \int_0^1 \overline{a} + \overline{e} z \, dxdz = \overline{a} + \frac{1}{2} \overline{e};\\ 
\int_f [\jump{\bm{u}}_f \times \bm{n}_f ] \cdot (1,0,0)^T ds &=& \int_0^1 \int_0^1 -\overline{c} + \overline{e} x \, dxdz = -\overline{c} + \frac{1}{2} \overline{e};\\
\int_f [\jump{\bm{u}}_f \times \bm{n}_f ] \cdot (x,0,z)^T ds &=& \int_0^1 \int_0^1  -\overline{c} x + \overline{e} x^2 + \overline{a} z + \overline{e} z^2 \, dxdz \\
&=& -\frac{1}{2} \overline{c} + \frac{2}{3} \overline{e} + \frac{1}{2} \overline{a}.  
\end{eqnarray*} 
Now, we begin our search of $\bm{u}$ of the aforementioned form \eqref{form} that belongs to $F_k$, which satisfies \eqref{notkorn2} for all $k=1,2,3,4,5,$ or $6$. This shall be discussed case by case as follows. 
\begin{itemize}[leftmargin=*]
\item For $k=1$, we shall choose $\overline{d} = - \overline{f} \neq 0$. Then by choosing $\overline{b} = -\frac{7}{6} \overline{f}$, we can make $(A2)$ and $(A3)$ hold, but $(A1)$ does not. For $(A4), (A5)$ and $(A6)$, we can simply choose $\overline{a} = \overline{c} = \overline{e} = 0$. On the other hand, for \eqref{rot}, we must choose $d_i, e_i$ and $f_i$ so that $d_1 + d_2 = f_1 + f_2 = e_1 + e_2 = 0$. This means, $e_1 = e_2 = 0$. On the other hand, we can choose $d_2 = c/2, d_1 = -c/2$ and $f_2 = -c/2$ and $f_1 = c/2$ for arbitrary $c \neq 0$ to guarantee the above conditions. We note that for $c \neq 0$, $\bm u$ satisfies \eqref{notkorn2}. 
\item For $k=2$, we shall choose $\overline{b} = \frac{1}{2} \overline{d} \neq 0$. Then by choosing $\overline{f} = 0$, we can make $(A1)$ and $(A3)$ hold, but $(A2)$ does not. For $(A4), (A5)$ and $(A6)$, we can simply choose $\overline{a} = \overline{c} = \overline{e} = 0$. On the other hand, for \eqref{rot}, we must choose $d_i, e_i$ and $f_i$ so that $d_1 + d_2 = f_1 + f_2 = e_1 + e_2 = 0$. This means, $e_1 = e_2 = 0$. On the other hand, we can choose $d_2 = c/2, d_1 = -c/2$ for arbitrary $c \neq 0$ to guarantee the above conditions. We note that for $c \neq 0$, $\bm u$ satisfies \eqref{notkorn2}. 
\item For $k=3$, we shall choose $\overline{b} = \frac{1}{2} \overline{f} \neq 0$. Then by choosing $\overline{d} = 0$, we can make $(A1)$ and $(A2)$ hold, but $(A3)$ does not. For $(A4), (A5)$ and $(A6)$, we can simply choose $\overline{a} = \overline{c} = \overline{e} = 0$. On the other hand, for \eqref{rot}, we must choose $d_i, e_i$ and $f_i$ so that $d_1 + d_2 = f_1 + f_2 = e_1 + e_2 = 0$. This means, $e_1 = e_2 = 0$. On the other hand, we can choose $f_2 = c/2, f_1 = -c/2$ for arbitrary $c \neq 0$ to guarantee the above conditions. We note that for $c \neq 0$, $\bm u$ satisfies \eqref{notkorn2}. 
\item For $k=4$, we shall choose $\overline{c} = \frac{1}{2} \overline{e} \neq 0$ 
and $\overline{a} = -\frac{1}{6} \overline{e} \neq 0$. Then by choosing $\overline{d} = 0$, we can make $(A5)$ and $(A6)$ hold, but $(A4)$ does not. For $(A1), (A2)$ and $(A3)$, we can simply choose $\overline{b} = \overline{d} = \overline{f} = 0$. On the other hand, for \eqref{rot}, we must choose $d_i, e_i$ and $f_i$ so that $e_1 + e_2 = 0$ and both $d_1 + d_2$ and $f_1 + f_2$ appropriately to guarantee \eqref{eq2} and \eqref{eq3}. With such a choice, $\bm u$ satisfies \eqref{notkorn2}.
\item For $k=5$, we shall choose $\overline{c} = \frac{1}{6} \overline{e} \neq 0$ 
and $\overline{a} = -\frac{1}{2} \overline{e} \neq 0$. Then by choosing $\overline{d} = 0$, we can make $(A4)$ and $(A6)$ hold, but $(A5)$ does not. For $(A1), (A2)$ and $(A3)$, we can simply choose $\overline{b} = \overline{d} = \overline{f} = 0$. On the other hand, for \eqref{rot}, we must choose $d_i, e_i$ and $f_i$ so that $e_1 + e_2 = 0$ and both $d_1 + d_2$ and $f_1 + f_2$ appropriately to guarantee \eqref{eq2} and \eqref{eq3}. With such a choice, $\bm u$ satisfies \eqref{notkorn2}.
\item For $k=6$, we shall choose $\overline{c} = \frac{1}{2} \overline{e} = -\overline{a} \neq 0$. Then by choosing $\overline{d} = 0$, we can make $(A4)$ and $(A5)$ hold, but $(A6)$ does not. For $(A1), (A2)$ and $(A3)$, we can simply choose $\overline{b} = \overline{d} = \overline{f} = 0$. On the other hand, for \eqref{rot}, we must choose $d_i, e_i$ and $f_i$ so that $e_1 + e_2 = 0$ and both $d_1 + d_2$ and $f_1 + f_2$ appropriately to guarantee \eqref{eq2} and \eqref{eq3}. With such a choice, $\bm u$ satisfies \eqref{notkorn2}.
\end{itemize} 
This completes the proof. 
\end{proof} 
\section{Some Applications}\label{appl} 
%In this section, we shall discuss some of finite elements that are known to work for elasticity problems. In our framework, we show that they satisfy the Korn's inequality. On the other hand, we show that some of them have more than what is essentially needed. Therefore, we derive new sets of finite element that can be used for solving elasticity through our sharp Korn's inequality. 

We begin this section with a simple example of the nonconforming finite element space on triangular partition %$\bm{V}_h$ 
introduced in Mardal et al., \cite{mardal2002robust} for the 2D case. Note that the space %$\bm{V}_h$ 
is composed of functions which are cubic vector fields with constant divergence on each element and with linear normal component on each edge. Namely, the local space is given as follows: 
\begin{equation} 
\bm V(T) = \{\bm v \in (P_3(T))^2 : \, {\rm div} \bm v \in P_0(T),\,\, \bm v\cdot\bm n_f |_f \in P_1(f), \,\, \forall f \in \partial T \}. 
\end{equation}  
Note that the degrees of freedom consist of 
\begin{equation}
\int_f (\bm v \cdot \bm n_f) q \, ds, \quad \forall q \in P_1(f) \quad \mbox{ and } \quad \int_f \bm v \cdot \bm t_f \, ds.
\end{equation} 
This is exactly what is required minimally for the Korn's inequality. 

In the next subsection, we shall consider enriched $H({\rm div})$ conforming finite element spaces due to Xue et al. \cite{xie2008uniformly}. We then slightly modify these spaces so that we obtain enriched Crouzeix-Raviart element spaces that satisfy the Korn's inequality. Finally, we shall discuss the lowest enrichment for the $H({\rm div})$ conforming finite element space \cite{johnny2012family} and its remedy by modifying the enrichment. Throughout the section, we consider triangular (for 2D) or tetrahedral (for 3D) partitions, for $T \in \mathcal{T}_h$, ${\rm{BDM}}_\ell(T)$ denotes the local Brezzi-Douglas-Marini (BDM) space of order $\ell \geq 1$, namely
\begin{equation}
{\rm{BDM}}_\ell(T) = (P_{\ell}(T))^d, \quad \ell \geq 1,
\end{equation}
and ${\rm{RT}}_\ell(T)$ denotes the Raviart-Thomas space of order $\ell \geq 0$, 
\begin{equation}
{\rm{RT}}_\ell(T) = (P_{\ell}(T))^d + \widetilde{P}_{\ell}(T) \bm x, \quad \ell \geq 0,  
\end{equation}
where $\widetilde{P}_\ell(T)$ denotes the homogeneous polynomial space of degree $\ell$. Also, for any given $T \in \mathcal{T}_h$, we denote $\lambda_i$ with $i = 1,\cdots,d+1$ the barycentric coordinates of $T$.  

We shall frequently use the following standard bubble functions. Namely, for $T \in \mathcal{T}_h$, we denote $b_T$ the bubble function defined by 
\begin{equation}\label{tbubble} 
b_T = \Pi_{i=1}^{d+1} \lambda_i.
\end{equation} 
%where $\lambda_i$s are barycentric coordinates for the element $T$. 
Similarly, we can define edge/face bubble functions denoted by $b_f$ for any $f \in \partial T$.   

\subsection{Enriched $H({\rm div})$ conforming finite elements that satisfy the Korn's inequality}
In this section, we shall recall some of finite elements, which can be shown to satisfy the Korn's inequality within our framework. The finite element spaces listed in this subsection are constructed based on ${H}({\rm div};\Omega)$ finite element spaces plus divergence free functions \cite{xie2008uniformly}. This subsection is an example indicating the powerfulness of our framework to show the Korn's inequality. 

The local space of the finite elements introduced in \cite{xie2008uniformly} would be of the following form: 
for $T \in \mathcal{T}_h$, 
\begin{equation}
\bm{V}_1(T) = \bm{V}_{1,T}^0 + \bm{curl} (b_T \bm{Y}),
\end{equation} 
where $\bm{V}^0_{1,T}$ is the following well-known $H({\rm div})$-conforming finite element spaces, either ${\rm{BDM}}_1(T)$ or ${\rm{RT}}_1(T)$. For the space $\bm{Y}$, we choose the following polynomial spaces: 
\begin{equation}
\bm{Y} = \left \{ \begin{array}{ll} 
\bm{Y}_1 = P_1(T) & \mbox{ for } d = 2; \\
\bm{Y}_2 = (P_1(T))^3 & \mbox{ for } d = 3;\\
\bm{Y}_3 = (P_1(T))^3/ {\rm span} \left \{ \left( \lambda_i - \frac{1}{3} \right )\nabla \lambda_i \right \}_{i=1}^4 & \mbox{ for } d = 3. 
\end{array} \right.  
\end{equation}
Total of six elements can be listed in the following Table \ref{xutable} with degrees of freedom. 
\begin{table} 
\begin{tabular}{ |p{2.17cm}||p{1.5cm}|p{0.5cm}|p{5.cm}|p{1cm}| }
\hline
\multicolumn{5}{|c|}{The six modified $H({\rm div})$ elements} \\
\hline\hline
{Elements}  & $\bm{V}_{1,T}^0$ & \bm{Y} & {\rm DOF} & Korn \\
\hline
$1^{\rm st}$ FEM (2D)  & ${\rm RT}_1(T)$     & $\bm{Y}_1$    & $ {\small{\begin{array}{l} 
\int_f \bm{v} \cdot \bm{n} q\, ds, \quad \forall q \in P_1(f), \\ \int_T \bm{v} \cdot \bm{q} \, dx, \quad \bm{q} \in (P_0(T))^2 \\ 
\int_f \bm{v} \cdot \bm{t} \, ds \end{array}}} $ & Yes \\
\hline 
$2^{\rm nd}$ FEM (2D) & ${\rm BDM}_1(T)$  & $\bm{Y}_1$   & $ {\small{\begin{array}{l} 
\int_f \bm{v} \cdot \bm{n} q\, ds, \quad \forall q \in P_1(f), \\  
\int_f \bm{v} \cdot \bm{t} \, ds \end{array}}} $  & Yes \\
\hline
$1^{\rm st}$ FEM (3D)  & ${\rm RT}_1(T)$      & $\bm{Y}_2$      & $ {\small{\begin{array}{l} 
\int_f \bm{v} \cdot \bm{n} q\, ds, \quad \forall q \in P_1(f), \\ \int_T \bm{v} \cdot \bm{q} \, dx, \quad \bm{q} \in (P_0(T))^3 \\ 
\int_f (\bm{v} \times \bm{n})\cdot \bm{r} \, ds, \quad \forall \bm{r} \in {\rm RT}_0(f) \end{array}}} $ & Yes  \\
\hline 
$2^{\rm nd}$ FEM (3D) & ${\rm BDM}_1(T)$  & $\bm{Y}_2$   &  $ {\small{\begin{array}{l} 
\int_f \bm{v} \cdot \bm{n} q\, ds, \quad \forall q \in P_1(f), \\
\int_f (\bm{v} \times \bm{n}) \cdot \bm{r} \, ds, \quad \forall \bm{r} \in {\rm RT}_0(f) \end{array}}} $ & Yes  \\
\hline 
$3^{\rm rd}$ FEM (3D)  & ${\rm RT}_1(T)$      & $\bm{Y}_3$  &$ {\small{\begin{array}{l} 
\int_f \bm{v} \cdot \bm{n} q\, ds, \quad \forall q \in P_1(f), \\ \int_T \bm{v} \cdot \bm{q} \, dx, \quad \bm{q} \in (P_0(T))^3 \\ 
\int_f (\bm{v} \times \bm{n})\cdot\bm{r} \, ds, \quad \forall \bm{r} \in (P_0(f))^2 \end{array}}} $ & No \\
\hline 
$4^{\rm th}$ FEM (3D)  & ${\rm BDM}_1(T)$   & $\bm{Y}_3$   &$ {\small{\begin{array}{l} 
\int_f \bm{v} \cdot \bm{n} q\, ds, \quad \forall q \in P_1(f), \\ 
\int_f (\bm{v} \times \bm{n})\cdot\bm{r} \, ds, \quad \bm{r} \in (P_0(f))^2 \end{array}}} $  & No  \\
 \hline
\end{tabular}\caption{FEMs introduced in \cite{xie2008uniformly}}\label{xutable} 
\end{table} 
Due to our framework, we easily notice that the first four elements are the ones that satisfy the Korn's inequality while the last two do not. 

%\begin{tabular}{  | c | c | c |}
%  \hline                       
%   $\bm M(K)$ & $\bm Y$& Degrees of Freedom (DOF)\\ \hline 
%   $\bm M^1(K)$ & $\mathcal {P}^1(K)$ & $(\bm v,\rho),~\forall \rho\in 
%  \bm A^{0}(K),
%  (\bm v\cdot \bm n_F,\mu)_F,\forall \mu\in \mathcal {P}^1(F), 
%  (\bm v\cdot \bm t_F,1)_F$  \\ \hline 
%   $\bm M^k(K)$ & $Q^{k-1}(K) $ & $(\bm v,\rho),~\forall \rho\in 
%  \bm A^{k-1}(K),
%  (\bm v\cdot \bm n_F,\mu)_F,\forall \mu\in \mathcal {P}^{k}(F), 
%  (\bm v\cdot \bm t_F, s)_F,\forall s\in  \mathcal {P}^{k-1}(F)$ \\ 
%  \hline  
%\end{tabular}
 %\begin{example} 
%\begin{center}
%The modified $H(div)$ Elements in 3D
%\begin{tabular}{  | c | c | c |}
%  \hline                       
%   $\bm M(K)$ & $\bm Y$& Degrees of Freedom (DOF)\\ \hline 
%   $\bm M^1(K)$ & $\bm{\mathcal {P}}^1(K)$ & $(\bm v,%\rho),~\forall \rho\in 
%  \bm A^{0}(K),
%  (\bm v\cdot \bm n_F,\mu)_F,\forall \mu\in \mathcal {P}^1(F), 
%  (\bm v\times \bm n_F,\bm r)_F,\forall \bm r\in RT_0(F)$  \\ %\hline 
%   $\bm M^k(K)$ & $\bm Q^{k-1}(K) $ & $(\bm v,\rho),~\forall %\rho\in 
%  \bm A^{k-1}(K),
%  (\bm v\cdot \bm n_F,\mu)_F,\forall \mu\in \mathcal {P}^{k}(F), 
%  (\bm v\times \bm n_F, \bm r)_F,\forall \bm r\in  \bm{\mathcal %{P}}^{k-1}(F)$ \\ 
%  \hline  
%\end{tabular}
%\end{center}
%\end{example} 

\subsection{Enriched ${\rm CR}$ finite elements that satisfy the Korn's inequality} 
In this subsection, we shall consider enriched {\rm CR} elements that satisfy the Korn's inequality. Our discussion will be done for 2D and 3D separately. We note that in \cite{falk1991nonconforming}, Falk analyzed Korn's inequality for some nonconforming two dimensional finite element spaces. In particular, it was shown that the Crouzeix-Raviart element \cite{crouzeix1973conforming} does not satisfy such a Korn's inequality. A simple remedy can be accomplished. Namely, we add the same enrichment for $\bm{V}_1(T) = {\rm BDM}_1(T)$ replaced with $({\rm CR}(T))^d$ for both $d = 2$ and $d = 3$ and therefore, we propose the enriched $({\rm{CR}}(T))^d$ finite element spaces so that for $T \in \mathcal{T}_h$, we define the local space and degrees of freedom by 
\begin{equation}
\bm{V}(T) := ({\rm CR}(T))^d + {\bm{curl}}(b_T \bm{Y}), 
\end{equation} 
where $\bm{Y}$ is either $\bm{Y}_1$ or $\bm{Y}_2$, depending on $d = 2$ or $d= 3$, respectively.  
\begin{Lemma}
The space $\bm{V}(T)$ with the degrees of freedom given as in Table \ref{xutable}  is unisolvent. 
\end{Lemma}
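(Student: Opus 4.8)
The plan is to prove unisolvence by the standard dimension-counting-plus-triviality argument: I would first verify that the number of degrees of freedom equals $\dim \bm{V}(T)$, and then show that any $\bm{v} \in \bm{V}(T)$ on which all listed functionals vanish must be identically zero. Since the functionals and the space dimension match, the vanishing implication alone yields unisolvence.

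First I would compute $\dim \bm{V}(T)$. Because $\bm{V}(T) = ({\rm CR}(T))^d + {\bm{curl}}(b_T \bm{Y})$, I would argue the sum is direct by checking that the bubble-enrichment piece ${\bm{curl}}(b_T \bm{Y})$ intersects $({\rm CR}(T))^d$ trivially — the curl of a bubble-weighted polynomial vanishes to high order on $\partial T$ and is divergence-free, whereas nonzero ${\rm CR}$ shape functions do not have this combination of boundary behavior, so the only common element is $0$. Then $\dim \bm{V}(T) = \dim ({\rm CR}(T))^d + \dim \bm{Y}$ (using that ${\bm{curl}}$ is injective on $b_T \bm{Y}$, which holds since $b_T \bm{Y}$ contains no nonzero constants in $2D$ and, in $3D$, because the chosen $\bm{Y}_2$ avoids the kernel of ${\bm{curl}}$ after multiplication by $b_T$). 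I would then tally this against the count of functionals in Table \ref{xutable}: for $d=2$ the normal moments $\int_f \bm{v}\cdot\bm{n}\,q\,ds$ with $q \in P_1(f)$, the tangential means $\int_f \bm{v}\cdot\bm{t}\,ds$, and for the ${\rm RT}_1$-based variant the interior moments against $(P_0(T))^2$; the $3D$ counts are analogous with ${\rm RT}_0(f)$ moments replacing tangential means.

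Next I would prove triviality. Suppose all degrees of freedom vanish on $\bm{v} \in \bm{V}(T)$. Writing $\bm{v} = \bm{v}_0 + {\bm{curl}}(b_T \bm{w})$ with $\bm{v}_0 \in ({\rm CR}(T))^d$ and $\bm{w} \in \bm{Y}$, I would exploit that $b_T$ vanishes on $\partial T$ so that the boundary functionals see only $\bm{v}_0$ (or its normal/tangential traces). The normal-component functionals against $P_1(f)$ force the normal trace of $\bm{v}_0$ to vanish on each face, and the tangential functionals (or ${\rm RT}_0(f)$ functionals in $3D$) kill the remaining boundary content, giving $\bm{v}_0 = 0$ by the unisolvence of the underlying ${\rm CR}$ or ${\rm BDM}_1/{\rm RT}_1$ element. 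With $\bm{v}_0 = 0$, I would conclude ${\bm{curl}}(b_T \bm{w}) = 0$, and injectivity of the enrichment map forces $\bm{w} = 0$, hence $\bm{v} = 0$.

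The main obstacle is the directness of the sum and the injectivity of ${\bm{curl}}$ on $b_T \bm{Y}$, particularly in $3D$ where $\bm{Y}_2 = (P_1(T))^3$ and ${\bm{curl}}$ has a nontrivial kernel (gradients). I would need to check carefully that multiplication by the bubble $b_T$ together with the polynomial degree structure of $\bm{Y}_2$ prevents $b_T \bm{w}$ from being a gradient unless $\bm{w}=0$, and that the resulting enrichment space is genuinely complementary to the ${\rm CR}$ part rather than overlapping it. This degree-and-boundary-order bookkeeping — rather than any deep idea — is where the real care is required; everything else follows the template of Brezzi-Douglas-Marini-type unisolvence proofs.
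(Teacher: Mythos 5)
There is a genuine gap, and it sits at the heart of your triviality argument. You claim that because $b_T$ vanishes on $\partial T$, \emph{all} the boundary functionals ``see only $\bm{v}_0$.'' This is false: only the \emph{normal} component of the enrichment vanishes on the boundary. Indeed, ${\bm{curl}}(b_T \bm{w})\cdot \bm{n}|_f$ equals the surface curl of the tangential trace of $b_T\bm{w}$, which is zero since $b_T|_{\partial T}=0$; but the \emph{tangential} part of ${\bm{curl}}(b_T\bm{w})$ on a face $f$ involves the normal derivative of $b_T$, which does not vanish on $f$. So the tangential (resp.\ ${\rm RT}_0(f)$) functionals do act nontrivially on the enrichment ${\bm{curl}}(b_T\bm{Y})$ --- and they must, for otherwise the element could not be unisolvent at all: for the ${\rm CR}/{\rm BDM}_1$-type variants every degree of freedom is a face integral, so if the whole enrichment were annihilated by the face functionals, any nonzero ${\bm{curl}}(b_T\bm{w})$ would have all vanishing degrees of freedom, contradicting the very statement you are proving. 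This also breaks the final step of your argument: once you have $\bm{v}_0=0$, injectivity of $\bm{w}\mapsto {\bm{curl}}(b_T\bm{w})$ gives you nothing, because you have no argument that ${\bm{curl}}(b_T\bm{w})=0$ in the first place; injectivity is only useful \emph{after} that fact is established.

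The correct structure (the one the paper follows) is asymmetric in the two families of functionals. First, the normal trace of the enrichment vanishes, so the normal moments against $P_1(f)$ constrain $\bm{v}_0$ alone; since $({\rm CR}(T))^d=(P_1(T))^d={\rm BDM}_1(T)$, the vanishing of these moments forces $\bm{v}_0=\bm{0}$ by ${\rm BDM}_1$ unisolvence --- no tangential functionals are needed for this. Second, and this is the substantive part your proposal skips over, one must show that the tangential (or ${\rm RT}_0(f)$) moments, restricted to the enrichment space ${\bm{curl}}(b_T\bm{Y})$, vanish only on the zero function; the paper delegates exactly this step to \cite{xie2008uniformly} and \cite{tai2006discrete}. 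Your directness discussion suffers from the same misconception (the curl of a bubble-weighted field does \emph{not} vanish to high order on $\partial T$, only its normal component does), and ``divergence-free'' does not separate the enrichment from $({\rm CR}(T))^d$ either, since that space contains divergence-free fields such as rigid motions. Without a genuine argument that the tangential functionals are injective on the enrichment, the proof does not go through.
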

\begin{proof} 
We let $\bm{v} \in \bm{V}(T)$ be decomposed into two parts: 
\begin{equation}
\bm{v} = \bm{v}_0 + {\bm{curl}}(b_T \bm{q}), \quad \bm{v}_0 \in ({\rm{CR}}(T))^d, \quad \bm{q} \in \bm{Y}. 
\end{equation} 
In $3D$, we have that
\begin{equation}
{\bm{curl}}(b_T \bm{q})\cdot\bm{n} = {\bm{curl}}_f (b_T \bm{q})_f = 0,  
\end{equation} 
where $(b_T \bm{q})_f$ is the tangential component of $b_T \bm{q}$ on $f$. In 2D, we have that ${\bm{curl}}(b_T q)\cdot\bm{n} = \partial_t (b_T q) = 0$. Thus $\bm{v} \cdot \bm{n} = \bm{v}^0 \cdot \bm{n} \in P_1(f), \quad \forall f \in \partial T$. Since for $({\rm CR}(T))^d$ is unisolvent with the degrees of freedom, $\int_f \bm{v}^0 \cdot \bm{n} q \, ds$ for $q \in P_1(f)$, we arrive at $\bm{v}^0 = \bm{0}$. Therefore, it is sufficient to show that $\bm{curl}(b_T \bm{q}) = \bm{0}$ using the tangential component of the degrees of freedom. However, this part of the proof is essentially presented in \cite{xie2008uniformly} and also in \cite{tai2006discrete}. Therefore, we shall skip the proof and completes the proof. 
\end{proof}  
In passing to next subsection, we propose another enrichment for ${\rm CR}^2$ finite element space that satisfies the Korn's inequality. 
\subsubsection{Enriched {\rm CR} for 2D} 
Let $T \in \mathcal{T}_h$ be a triangle in 2D with $\{a_i, a_j, a_k\}$ vertexes of $T$. We denote the barycentric coordinates by $\{\lambda_i, \lambda_j, \lambda_k\}$. Further, $\bm n_{ij},\bm n_{jk},\bm n_{ki} $ are the normals to the edges $\{e_{ij} = [a_i,a_j], e_{jk} = [a_j,a_k], e_{ki} = [a_k,a_i]\}$. We let ${\rm CR}(T)$ be the standard {\rm CR} element on $T$. We then define an edge bubble function $b$ by 
\begin{equation}
b = \lambda_i \lambda_j + \lambda_j \lambda_k + \lambda_k \lambda_i - \frac{1}{6}, \end{equation}  
where the constant $1/6$ comes from the fact that 
\begin{equation}
\frac{1}{6} = \frac{1}{|f_{\nu\mu}|} \int_{f_{\nu\mu}} \lambda_\nu \lambda_\mu \, ds, \quad \nu\mu = ij, jk, ki. 
\end{equation} 
We are now in a position to define an enriched {\rm CR} element on $T$ by
\begin{equation} 
\bm{E_{CR}}(T) = ({\rm{CR}}(T))^2 + \bm{V_{EC}}(T), 
\end{equation} 
where ${\bm{V_{EC}}}(T)$ is the space with the following functions as bases: 
\begin{equation}
\bm \psi_{ij} = b (\lambda_i-\lambda_j)\bm n_{ij},~~~\bm \psi_{jk}=b (\lambda_j-\lambda_k)\bm n_{jk},~~~\bm \psi_{ki} = b(\lambda_k-\lambda_i)\bm n_{ki}.
\end{equation} 
The degrees of freedom are given as follows: 
\begin{equation}\label{dof} 
\int_{f} \bm v \cdot \bm{t}_f ds \quad \hbox{and} \quad \int_{f} \bm v\cdot \bm{n}_f q  ds, \quad \forall q \in P_1(f).
\end{equation} 
It is simple to prove that the aforementioned degrees of freedom can be equivalently formulated as 
\begin{equation}\label{dof} 
\int_{f} \bm v ds \quad \hbox{and} \quad \int_{f} \bm v \cdot \bm{n}_f q  ds, \quad \forall q \in P_1(f).
\end{equation} 
We shall now prove that the space $\bm{E_{CR}}(T)$ with the degrees of freedom \eqref{dof} is unisolvent. 
\begin{Lemma}
The space $\bm {E_{CR}}(T)$ with the degrees of freedom \eqref{dof} is unisolvent. 
\end{Lemma}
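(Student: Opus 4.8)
The plan is to prove unisolvence by a dimension count followed by an injectivity argument. Since $\dim \bm{E_{CR}}(T) \le \dim(({\rm CR}(T))^2) + \dim \bm{V_{EC}}(T) \le 6 + 3 = 9$, and the list \eqref{dof} consists of exactly $9$ independent functionals (two carried by $\int_f \bm v\, ds$ and two normal moments on each of the three edges, with the zeroth normal moment shared with the average), it suffices to show that the evaluation map is injective: if $\bm v \in \bm{E_{CR}}(T)$ annihilates every functional in \eqref{dof}, then $\bm v = 0$. I would write $\bm v = \bm v_0 + \bm w$ with $\bm v_0 \in ({\rm CR}(T))^2$ and $\bm w = \sum_{\nu\mu} c_{\nu\mu}\bm \psi_{\nu\mu}$, the sum running over $\nu\mu \in \{ij, jk, ki\}$, and then decouple $\bm v_0$ from $\bm w$ edge by edge. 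Note that once injectivity and $\dim \le 9$ are in hand, the evaluation map is a bijection, which simultaneously yields $\dim \bm{E_{CR}}(T) = 9$ (so the sum is automatically direct and the $\bm \psi_{\nu\mu}$ independent).

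The first step exploits the defining normalization of $b$. Because $\frac{1}{|f|}\int_f b\, ds = 0$ on every edge $f$, a short computation (writing $\lambda_\nu|_f = \tfrac12 \pm \tfrac12(\lambda_\nu - \lambda_\mu)$ and using that $b(\lambda_\nu-\lambda_\mu)$ is odd about the midpoint of its own edge) shows $\int_f b(\lambda_\nu - \lambda_\mu)\, ds = 0$ for every edge $f$ and every pair $\nu\mu$. Hence $\int_f \bm \psi_{\nu\mu}\, ds = \bm n_{\nu\mu}\int_f b(\lambda_\nu-\lambda_\mu)\, ds = 0$, so the enrichment carries no edge-average and $\int_f \bm v\, ds = \int_f \bm v_0\, ds$ for each $f$. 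Since the three vector edge-averages of $\bm v_0$ are precisely the (scaled) standard Crouzeix--Raviart degrees of freedom and $({\rm CR}(T))^2$ is unisolvent, their vanishing forces $\bm v_0 = 0$. It then remains to treat $\bm v = \bm w$: all functionals in \eqref{dof} now vanish automatically except the first normal moments, so testing against the zero-mean linear weight $\ell_f$ on each edge (one may take $\ell_f = \lambda_\nu - \lambda_\mu$ on $f = e_{\nu\mu}$) yields a homogeneous system $M\,\bm c = 0$ for $\bm c = (c_{ij},c_{jk},c_{ki})^T$.

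The crucial reduction is the structure of $M$. Its entry in row $f$, column $\nu\mu$ equals $(\bm n_{\nu\mu}\cdot\bm n_f)\,J_{f,\nu\mu}$ with $J_{f,\nu\mu} = \int_f b(\lambda_\nu-\lambda_\mu)\,\ell_f\, ds$, and the same splitting $\lambda_\nu|_f = \tfrac12 \pm \tfrac12(\lambda_\nu-\lambda_\mu)$ (the even part being killed by the odd weight $\ell_f$) shows that on each fixed edge the three edge-integrals stand in the ratio $1 : -\tfrac12 : -\tfrac12$; that is, $J_{f,\nu\mu} = I_f$ when $e_{\nu\mu}=f$ and $J_{f,\nu\mu} = -\tfrac12 I_f$ otherwise, where $I_f = \int_f b\,\ell_f^2\, ds = -\tfrac{1}{45}|f| \neq 0$. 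Factoring the nonzero $I_f$ out of row $f$ turns $M$ into $\widetilde M = \tfrac32 \bm I - \tfrac12 G$, where $G$ is the Gram matrix of the three unit edge-normals $\bm n_{ij}, \bm n_{jk}, \bm n_{ki}$.

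The main obstacle is showing $\det M \neq 0$, and this is exactly where the Gram structure resolves matters. As three vectors in $\Reals{2}$, the normals give a rank-two positive semidefinite $G$ whose eigenvalues are $0, \mu_1, \mu_2$ with $\mu_1 + \mu_2 = {\rm tr}\,G = 3$ and $\mu_1, \mu_2 > 0$ (strict positivity holds for a non-degenerate $T$, since the three normals are pairwise non-parallel and span $\Reals{2}$). Therefore $\widetilde M = \tfrac32 \bm I - \tfrac12 G$ has eigenvalues $\tfrac32,\ \tfrac32 - \tfrac12\mu_1,\ \tfrac32 - \tfrac12\mu_2$, and none can vanish, because $\mu_i = 3$ would force the complementary eigenvalue to be $0$, contradicting $\mu_1,\mu_2>0$. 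Hence $M$ is invertible, $\bm c = 0$, and $\bm v = \bm v_0 + \bm w = 0$, which establishes injectivity and therefore unisolvence. The only points requiring care are the one-variable verifications that $I_f \neq 0$ and that the edge ratios are $1:-\tfrac12:-\tfrac12$, together with the strict positivity of $\mu_1,\mu_2$, all elementary for a non-degenerate triangle.
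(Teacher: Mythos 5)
Your proof is correct and follows the same skeleton as the paper's: decompose $\bm v=\bm v_0+\bm w$ with $\bm v_0\in({\rm CR}(T))^2$ and $\bm w=\sum_{\nu\mu} c_{\nu\mu}\bm\psi_{\nu\mu}$, observe that $\int_f b\,q\,ds=0$ for all $q\in P_1(f)$ on every edge (so the enrichment contributes nothing to the vector edge averages, forcing $\bm v_0=0$ by unisolvence of $({\rm CR}(T))^2$), and then reduce the remaining normal-moment conditions on $\bm w$ to a homogeneous $3\times 3$ system for $(c_{ij},c_{jk},c_{ki})$. The genuine difference lies in how invertibility of that system is established. The paper tests against $\lambda_i$ on $e_{ij}$ (and cyclically), arrives at the matrix with diagonal $-2$ and off-diagonal entries $\bm n_{\nu\mu}\cdot\bm n_{f}$, and concludes by strict diagonal dominance, using $|\bm n_{ij}\cdot\bm n_{ki}|+|\bm n_{jk}\cdot\bm n_{ki}|<2$ for a non-degenerate triangle. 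You instead test against the zero-mean weights $\ell_f=\lambda_\nu-\lambda_\mu$, compute the ratios $1:-\tfrac12:-\tfrac12$ and $I_f=-\tfrac{1}{45}|f|\neq 0$ (both of which check out), and identify the row-rescaled matrix as $\tfrac32\bm I-\tfrac12 G$ with $G$ the Gram matrix of the three unit normals; since $G$ is positive semidefinite of rank two with trace $3$, its eigenvalues are $0,\mu_1,\mu_2$ with $\mu_1,\mu_2>0$ and $\mu_1+\mu_2=3$, so no eigenvalue of $\tfrac32\bm I-\tfrac12 G$ can vanish. The two matrices differ only by row scaling, so both arguments attack the same object; the paper's dominance bound is shorter, while your spectral argument is more structural and makes transparent that degeneracy of $T$ is the only possible obstruction. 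One caveat on wording: ``injectivity and $\dim\le 9$ give a bijection'' is not literally valid---surjectivity needs $\dim=9$; however, your argument in fact proves injectivity of the composite map $(\bm v_0,\bm c)\mapsto \bm v_0+\sum_{\nu\mu} c_{\nu\mu}\bm\psi_{\nu\mu}\mapsto \mbox{DOF values}$ on the $9$-parameter family, which simultaneously yields directness of the sum, $\dim\bm{E_{CR}}(T)=9$, and bijectivity, exactly as your parenthetical remark indicates, so the proof stands.
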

\begin{proof}
We let $\bm{v} \in \bm{E_{CR}}(T)$ be decomposed into two parts: 
\begin{equation}
\bm{v} = \bm{v}_0 + \bm{v}_1, \quad \bm{v}_0 \in ({\rm{CR}}(T))^2, \quad \bm{v}_1 \in \bm{V_{EC}}(T).
\end{equation} 
We shall assume that $\int_f \bm{v} \, ds = 0$ and $\int_{f} \bm v \cdot \bm{n}_f q \, ds=0, \,\, \forall q \in P_1(f)$, for all $f \in \partial T$ and shall show that $\bm v = \bm{0}$. We first note that for any $\bm {v}_1 \in \bm {V_{EC}}(T)$, we have 
$\int_{f} \bm {v}_1 \, ds = 0$, for any $f \in \partial T$, since for all $f \in \partial T$, it holds that $\int_{f} b q \, ds = 0, \forall q \in P_1(f)$. This measn that $\int_{f} \bm {v}_0\, ds = 0$ for any $f \in \partial T$ and so, $\bm {v}_0 = 0$. Therefore, it is sufficient to show that $\bm{v}_1 = \bm{0}$. By the definition of $\bm{V_{CR}}(T)$, we can write $\bm {v}_1$ as follows for some constants $c_1, c_2, c_3$:
\begin{equation}
\bm{v}_1 = c_1\bm \psi_{ij} + c_2 \bm \psi_{jk} + c_3 \bm \psi_{ki}.
\end{equation}
The fact that $\int_{f} \bm{v}_1 \cdot \bm{n} \, q  ds = 0, \,\, \forall q \in P_1(f)$ leads to the following linear system: 
\begin{subeqnarray*} 
\int_{f_{ij}} (c_1\bm \psi_{ij}+c_2\bm \psi_{jk}+c_3 \bm \psi_{ki}) \cdot \bm n_{ij} \lambda_i  ds &=& 0, \\ 
\int_{f_{jk}}(c_1\bm \psi_{ij}+c_2\bm \psi_{jk}+c_3 \bm \psi_{ki})\cdot \bm n_{jk} \lambda_j  ds &=& 0, \\ 
\int_{f_{ki}} (c_1\bm \psi_{ij}+c_2\bm \psi_{jk}+c_3 \bm \psi_{ki}) \cdot \bm n_{ki} \lambda_k  ds &=& 0.
\end{subeqnarray*} 
Or equivalently, we have that 
\begin{equation}\label{system} 
\left ( \begin{array}{ccc} 
-2 & \bm{n}_{jk} \cdot \bm{n}_{ij} & \bm{n}_{ki} \cdot \bm{n}_{ij}  \\
 \bm{n}_{jk} \cdot \bm{n}_{ij} & -2 & \bm{n}_{ki} \cdot \bm{n}_{jk}   \\
 \bm{n}_{ij} \cdot \bm{n}_{ki} & \bm{n}_{jk} \cdot \bm{n}_{ki} & - 2 \end{array} \right ) \left ( \begin{array}{c} c_1 \\ c_2 \\ c_3 \end{array} \right ) = \left ( \begin{array}{c} 0 \\ 0 \\ 0 \end{array} \right ).  
 \end{equation} 
%$ -2 c_1+c_2\bm n_{jk}\cdot \bm n_{ij}+c_3\bm n_{ki} \cdot \bm n_{ij}=0$,
%$$
%$$
%c_1\bm n_{jk}\cdot \bm n_{ij}-2 c_2+c_3\bm n_{ki}\cdot \bm n_{jk}=0,
%$$
%$$
%c_1\bm n_{ij}\cdot \bm n_{ki}+c_2\bm n_{jk}\cdot \bm n_{ki}-2 c_3=0.
%$$
Since $|\bm n_{ij}\cdot \bm n_{ki}|+|\bm n_{jk}\cdot \bm n_{ki}| < 2$, the coefficient matrix in the equation \eqref{system} is diagonally dominant and so it is invertible, which implies $\bm {v}_1 = \bm 0$. This completes the proof. 
\end{proof}

\subsection{A new finite element space that satisfies the Korn's inequality}

In this subsection, we shall investigate the finite elements introduced in \cite{johnny2012family}. We observe that some of their finite element for the lowest order case do not satisfy the Korn's inequality and provide modification, which satisfies the Korn's inequality. For a given $T \in \mathcal{T}_h$, 
%and let $\bm{V}_{1,T}^0$ denote either the local Brezzi-Douglas-Marini (BDM) space of order $1$, namely
%\begin{equation}
%\bm{V}_{1,T}^0 = \bm{\mathcal{P}}^1(K). 
%\end{equation}
%or the Raviart-Thomas (RT) space of order $1$
%\begin{equation}
%\bm{V}_{1,T}^0 = \bm{\mathcal{P}}^{1}(K)+\mathcal{P}^{1}(K)\bm x. 
%\end{equation}
%
%We also define the space $\bm A^{0}$ as follows: If $\bm{V}_{1,T}^0$ is given by the RT space, then we set 
%$\bm A^{0}(K)=\bm{\mathcal{P}}^{0}(K)$; if $\bm{V}_{1,T}^0$ is given by  the BDM space, then we define  
%$\bm A^{0}(K)=\bm N^{0}(K)$, the Nedelec space of index $0$:
%$$
%\bm N^{0}(K)=\{\bm v\in \bm{\mathcal{P}}^{0}(K):\bm v\cdot \bm x=0 \}.
%$$
we let 
\begin{equation}
Q_f^{0}(T) = P_{0}(T)~
\hbox{for}~ d=2.
\end{equation}
and 
\begin{equation}
\bm{Q}_f^{0}(T) = (P_{0}(T))^3 \times \bm{n}_f~  \hbox{for}~ d=3,
\end{equation}
and define $Q^{0}(T) = \sum_{f \in \partial T} b_f {Q}_f^{0}(T)$ for $d = 2$ and $\bm Q^{0}(T)= \sum_{f \in \partial T} b_f \bm{Q}_f^{0}(T)$ for $d=3$. 

We now consider the lowest order case of finite elements introduced in \cite{johnny2012family}. We recall that the local space was given as follows:  
for $T \in \mathcal{T}_h$, 
\begin{equation}\label{guzman} 
\bm{V}_1(T) = \bm{V}_{1,T}^0 + \bm{curl} (b_T \bm{Y}),
\end{equation} 
where $\bm{V}^0_{1,T}$ is the well-known $H({\rm div})$-conforming finite element spaces, either ${\rm{BDM}}_1(T)$ or ${\rm{RT}}_1(T)$. The space $\bm{Y}$ is given as following:
\begin{equation}
\bm{Y} = \left \{ \begin{array}{ll} 
\bm{Y}_4 = Q^0(T) & \mbox{ for } d = 2; \\
\bm{Y}_5 = \bm Q^0(T)  & \mbox{ for } d = 3.
\end{array} \right.  
\end{equation}
From our sharp Korn's inequality, we can easily check if the above four finite elements given in \eqref{guzman} from different choices of $\bm{V}_{1,T}^0$ and $\bm{Y}$ satisfy the Korn's inequality immediately. This is presented in the following Table \ref{guzmantable}: 
\begin{table}[ht]
\begin{tabular}{ |p{2.17cm}||p{1.5cm}|p{0.5cm}|p{5.cm}|p{1cm}| }
\hline
\multicolumn{5}{|c|}{The six modified $H({\rm div})$ elements} \\
\hline\hline
{Elements}  & $\bm{V}_{1,T}^0$ & \bm{Y} & {\rm DOF} & Korn \\
\hline
$1^{\rm st}$ FEM (2D)  & ${\rm RT}_1(T)$     & $\bm{Y}_4$    & $ {\small{\begin{array}{l} 
\int_f \bm{v} \cdot \bm{n} q\, ds, \quad \forall q \in P_1(f), \\ \int_T \bm{v} \cdot \bm{q} \, dx, \quad \bm{q} \in (P_0(T))^2 \\ 
\int_f \bm{v} \cdot \bm{t} \, ds \end{array}}} $ & Yes \\
\hline 
$2^{\rm nd}$ FEM (2D) & ${\rm BDM}_1(T)$  & $\bm{Y}_4$   & $ {\small{\begin{array}{l} 
\int_f \bm{v} \cdot \bm{n} q\, ds, \quad \forall q \in P_1(f), \\  
\int_f \bm{v} \cdot \bm{t} \, ds \end{array}}} $  & Yes \\
\hline 
$3^{\rm rd}$ FEM (3D)  & ${\rm RT}_1(T)$      & $\bm{Y}_5$  &$ {\small{\begin{array}{l} 
\int_f \bm{v} \cdot \bm{n} q\, ds, \quad \forall q \in P_1(f), \\ \int_T \bm{v} \cdot \bm{q} \, dx, \quad \bm{q} \in (P_0(T))^3 \\ 
\int_f (\bm{v} \times \bm{n})\cdot\bm{r} \, ds, \quad \forall \bm{r} \in (P_0(f))^2 \end{array}}} $ & No \\
\hline 
$4^{\rm th}$ FEM (3D)  & ${\rm BDM}_1(T)$   & $\bm{Y}_5$   &$ {\small{\begin{array}{l} 
\int_f \bm{v} \cdot \bm{n} q\, ds, \quad \forall q \in P_1(f), \\ 
\int_f (\bm{v} \times \bm{n})\cdot\bm{r} \, ds, \quad \bm{r} \in (P_0(f))^2 \end{array}}} $  & No  \\
 \hline
\end{tabular}
\caption{Lowest FEMs introduced in \cite{johnny2012family}}\label{guzmantable} 
\end{table} 
We note that particularly, for the local space 
\begin{equation} 
\bm{V}_1(T) = \bm{V}_{1,T}^0 + \bm{curl} (b_T \bm{Y}_5)= \bm{V}_{1,T}^0 + \bm{curl} (b_T \bm{Q}^0(T)),
\end{equation} 
the degrees of freedom are not sufficient since the constant projected tangential component on the face being zero is not sufficient for the Korn's inequality. 
Therefore, we investigate how to remedy this situation. Namely, we shall modify the local space $\bm V_1(T)$ and the definition of degrees of freedom 
so that the Korn's inequality can be satisfied in the remainder of this subsection.  

We define the local space for the nonconforming finite element that satisfies the Korn's inequality :
\begin{equation}
\bm{V}^1(T) =  \bm{V}_{1,T}^0 +  {\bm{curl}} \left ( b_T \bm{Q}^*(T) \right ),
\end{equation}
where 
\begin{eqnarray*}
\bm{Q}^{*}(T) &=& \sum_{f \in \partial T} b_f \bm{Q}_f^*(T), 
\end{eqnarray*}
with 
\begin{eqnarray*}
\bm{Q}_f^*(T) = \left \{ \bm{q} \times \bm{n}_f : \bm{q} \in \bm{RM}(T),  \int_T (\bm{q} \times {\bm{n}}_f) \cdot (\bm{w} \times \bm{n}_f) b_T b_f \, dx  = 0, \,\, \bm{w} \in (P_0(T))^3 \right \}.  
\end{eqnarray*}
It is easy to see that the dimension of $\bm{Q}_f^*(T)$ is three.   
%\begin{equation}
%{\rm dim} \bm{Q}_f^*(T) = {\rm dim} \left ( \left ( \bm{a} + \bm{A} \bm{x} \right ) \times \bm{n}_f \right ) - {\rm dim} \left (\mathcal{P}_0(T))^3 \times \bm{n}_f \right ) = 3.
%\end{equation}
Therefore, the dimension of the space $\bm{Q}^*(T)$ is 12. Note that the space $ \bm{V}_{1,T}^0$ can be either $BDM_1$ or $RT_1$. But, we shall only consider $BDM_1$ for simplicity. Under this choice, it is well-known that a function $\bm{v} \in \bm{V}_{1,T}^0$ is uniquely determined by the following degrees of freedom: for all faces $f \in \partial T$,  
\begin{equation}
\langle \bm{v}\cdot \bm{n}_f, \bm{\mu} \rangle_f, \quad \forall \bm{\mu} \in ({P}_1(f))^3.  
\end{equation}
With this in mind, we can provide the following degrees of freedom that define a function $\bm{v} \in \bm{V}^1(T)$ uniquely: for all $f\in\partial T$,  
\begin{eqnarray}
\langle \bm{v}\cdot \bm{n}_f, \bm{\mu} \rangle_f, && \quad 
\forall \bm{\mu} \in ({P}_1(f))^3 \label{dof1}; \\
\langle \bm{v}\times \bm{n}_f, \bm{\kappa} \rangle_f, && \quad \forall \bm{\kappa} \in {\rm{RT}}_0(f). \label{dof2}
\end{eqnarray}
We can show that the following holds true:  
\begin{Lemma}
For any $T \in \mathcal{T}_h$, we have  
\begin{equation}
{\rm dim} \,{\bm{curl}} \left ( b_T \bm{Q}^*(T) \right ) = 4 \, {\rm dim} \, \bm{Q}_f^*(T).
\end{equation}
\end{Lemma}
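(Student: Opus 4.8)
The plan is to prove the stated dimension identity by showing that the linear map $\bm{p}\mapsto\bm{curl}(b_T\bm{p})$ is injective on $\bm{Q}^*(T)$. Since multiplication by the nonzero bubble $b_T$ is injective on polynomials, one has $\dim(b_T\bm{Q}^*(T))=\dim\bm{Q}^*(T)=4\,{\rm dim}\,\bm{Q}_f^*(T)=12$, the last equality being the one already recorded above. Hence, by rank--nullity, the asserted identity ${\rm dim}\,\bm{curl}(b_T\bm{Q}^*(T))=4\,{\rm dim}\,\bm{Q}_f^*(T)$ is equivalent to the claim that $b_T\bm{Q}^*(T)$ contains no nonzero curl-free field. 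So I would reduce everything to a single implication: if $\bm{p}\in\bm{Q}^*(T)$ and $\bm{curl}(b_T\bm{p})=0$, then $\bm{p}=0$.

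First I would invoke the Poincar\'e lemma on the convex (hence simply connected) tetrahedron $T$: from $\bm{curl}(b_T\bm{p})=0$ we obtain a polynomial $\phi$ with $b_T\bm{p}=\nabla\phi$. Because $b_T$ vanishes on $\partial T$, this polynomial identity forces $\nabla\phi=0$ on all of $\partial T$. Vanishing of the tangential gradient makes $\phi$ constant on each connected face, and since adjacent faces share an edge we get $\phi\equiv c$ on $\partial T$; vanishing of the normal derivative then makes $\phi-c$ vanish to second order on each face $\{\lambda_i=0\}$, so $\phi-c$ is divisible by $\lambda_i^2$ for every $i$ and hence by $b_T^2=\prod_i\lambda_i^2$. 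Writing $\phi=c+b_T^2\chi$ and differentiating gives $\bm{p}=2\chi\nabla b_T+b_T\nabla\chi$. A degree count closes this step: every element of $\bm{Q}^*(T)$ has total degree at most $4$ (the face bubble $b_f$ has degree $3$ and $\bm{q}\times\bm{n}_f$ is affine), whereas $\nabla(b_T^2\chi)$ has degree exactly $7+\deg\chi$ when $\chi\neq0$; thus $\bm{p}$ has degree $3+\deg\chi$, which forces $\deg\chi\le1$, i.e. $\chi$ is affine.

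The crux is a restriction-to-faces argument. Writing $\bm{p}=\sum_i b_{f_i}\bm{r}_{f_i}$ with $\bm{r}_{f_i}=\bm{q}_{f_i}\times\bm{n}_{f_i}$, I would restrict the identity $\sum_i b_{f_i}\bm{r}_{f_i}=2\chi\nabla b_T+b_T\nabla\chi$ to the face $f_i=\{\lambda_i=0\}$. On $f_i$ every summand with $j\neq i$ drops, because $b_{f_j}$ carries the factor $\lambda_i$, leaving $b_{f_i}\bm{r}_{f_i}|_{f_i}$, which is \emph{tangential} to $f_i$ since $\bm{r}_{f_i}\cdot\bm{n}_{f_i}=0$. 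On the right side $b_T|_{f_i}=0$ and $\nabla b_T|_{f_i}=b_{f_i}\nabla\lambda_i$, which is \emph{normal} to $f_i$ because $\nabla\lambda_i\parallel\bm{n}_{f_i}$. A field that is simultaneously tangential and normal vanishes, so, using $b_{f_i}>0$ in the interior of $f_i$, we get both $\bm{r}_{f_i}|_{f_i}=0$ and $\chi|_{f_i}=0$. Since $\chi$ is affine and vanishes on all four faces, $\chi\equiv0$, whence $\bm{p}=2\chi\nabla b_T+b_T\nabla\chi=0$. This yields the injectivity and hence the lemma. I expect the main obstacle to be exactly this face-by-face bookkeeping, namely tracking which bubble factors survive on each face together with the tangential-versus-normal dichotomy; the divisibility $\phi=c+b_T^2\chi$ with its accompanying degree bound is the other delicate point, while the reduction via rank--nullity is routine.
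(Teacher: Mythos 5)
Your proof is correct, but it is genuinely more than what the paper does: the paper gives no argument at all for this lemma, deferring instead to ``the argument of Lemma 3.2 in \cite{johnny2012family}.'' Your write-up is, in effect, a self-contained reconstruction of that cited argument (curl-free implies a polynomial potential $\phi$; $\nabla\phi=b_T\bm{p}$ vanishes on $\partial T$, forcing $\phi-c$ to be divisible by $b_T^2$; a degree count bounds the cofactor $\chi$; face-by-face restriction with the tangential-versus-normal dichotomy kills $\chi$ and hence $\bm{p}$), adapted to the space $\bm{Q}^*(T)$ of this paper. That adaptation is not vacuous: $\bm{Q}^*(T)$ here is built from rigid-motion tangential traces $\bm{q}\times\bm{n}_f$ with an extra $L^2(b_Tb_f)$-orthogonality constraint, not the $(P_0)^3\times\bm{n}_f$ face spaces of the cited reference, so your explicit verification is exactly the check the paper leaves to the reader. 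Two remarks on the fine points. First, your argument correctly concludes only $\chi\equiv 0$ and hence $\bm{p}=0$; note that the intermediate fact $\bm{r}_{f_i}|_{f_i}=0$ would \emph{not} by itself give $\bm{r}_{f_i}\equiv 0$ on $T$, since affine fields of the form $\bm{q}\times\bm{n}_f$ with $\bm{q}\in\bm{RM}(T)$ can vanish on the plane of $f$ without vanishing identically (e.g. $\bm{q}=\bm{\omega}\times\bm{x}$ with suitable $\bm{\omega}$ gives $\bm{q}\times\bm{n}_f=\lambda_i\bm{c}$); fortunately you never need that implication. Second, you import the identity $\dim\bm{Q}^*(T)=4\,\dim\bm{Q}_f^*(T)=12$ from the paper, which is legitimate since the paper records it just before the lemma, but be aware that this directness of the sum is precisely where the orthogonality constraint in the definition of $\bm{Q}_f^*(T)$ is used (it rules out the $\lambda_i\bm{c}$ fields above); if you wanted a fully self-contained proof you would add that one-line moment computation. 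What your route buys is a paper-independent, checkable proof; what the paper's route buys is brevity at the cost of asking the reader to trust that the cited argument transfers to a modified space.
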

\begin{proof}
The proof can be done similarly to the argument of Lemma 3.2 in \cite{johnny2012family}. Therefore, we complete the proof. 
\end{proof}
\begin{Theorem}
We have the following relation that 
\begin{equation}
\bm{V}^1(T) = \bm{V}_{1,T}^0 \oplus {\bm{curl}} \left ( b_T \bm{Q}^*(T) \right )
\end{equation}
and
\begin{equation}
{\rm dim} \bm{V}^1(T) = {\rm dim} \bm{V}_{1,T}^0 + {\rm dim} {\bm{curl}} \left ( b_T \bm{Q}^*(T) \right ). 
\end{equation}
Furthermore, any function $\bm{v} \in \bm{V}^1(T)$ is uniquely determined by the degrees of freedom \eqref{dof1} and \eqref{dof2}. 
\end{Theorem}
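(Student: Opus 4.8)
The plan is to prove the directness of the sum first, read off the dimension identity as a consequence, and then establish unisolvence by a counting-plus-injectivity argument. For the direct sum, the decisive fact---already exploited in the unisolvence proof of $\bm V(T)$ above---is that every bubble enrichment has vanishing normal trace: since $b_T$ vanishes on $\partial T$, one has ${\bm{curl}}(b_T\bm q)\cdot\bm n_f={\bm{curl}}_f(b_T\bm q)_f=0$ on each face $f\in\partial T$, for all $\bm q\in\bm Q^*(T)$. Thus any $\bm v\in\bm V_{1,T}^0\cap{\bm{curl}}(b_T\bm Q^*(T))$ satisfies $\bm v\cdot\bm n_f=0$ on every face; since $\bm v\in\bm V_{1,T}^0={\rm BDM}_1(T)$ is uniquely determined by its normal moments \eqref{dof1}, it follows that $\bm v=\bm 0$. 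Hence the sum is direct, and consequently ${\rm dim}\,\bm V^1(T)={\rm dim}\,\bm V_{1,T}^0+{\rm dim}\,{\bm{curl}}(b_T\bm Q^*(T))$, which by the preceding Lemma equals $12+4\,{\rm dim}\,\bm Q_f^*(T)=24$.

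For unisolvence I would first count the functionals: \eqref{dof1} supplies the $3$ standard ${\rm BDM}_1$ normal moments per face and \eqref{dof2} supplies $\dim{\rm RT}_0(f)=3$ tangential moments per face, for $4(3+3)=24$ functionals in total, matching ${\rm dim}\,\bm V^1(T)$. It therefore suffices to show that if all functionals in \eqref{dof1}--\eqref{dof2} vanish on some $\bm v\in\bm V^1(T)$, then $\bm v=\bm 0$. Using the direct sum, decompose $\bm v=\bm v_0+{\bm{curl}}(b_T\bm q)$ with $\bm v_0\in\bm V_{1,T}^0$ and $\bm q\in\bm Q^*(T)$. Since the enrichment has zero normal trace, $\bm v\cdot\bm n_f=\bm v_0\cdot\bm n_f$ on each face, so the vanishing of \eqref{dof1} kills all normal moments of $\bm v_0$; by ${\rm BDM}_1$ unisolvence $\bm v_0=\bm 0$, and $\bm v={\bm{curl}}(b_T\bm q)$.

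The heart of the argument is to deduce $\bm q=\bm 0$ from the vanishing of the tangential functionals \eqref{dof2}; by the preceding Lemma the map $\bm q\mapsto{\bm{curl}}(b_T\bm q)$ is injective on $\bm Q^*(T)$, so proving $\bm q=\bm 0$ suffices. I would first identify the tangential trace. Write $\bm q=\sum_{f'\in\partial T}b_{f'}\bm q_{f'}$ with $\bm q_{f'}\in\bm Q_{f'}^*(T)$, the sum being direct since ${\rm dim}\,\bm Q^*(T)=12=4\,{\rm dim}\,\bm Q_f^*(T)$. Using ${\bm{curl}}(b_T\bm q)=\nabla b_T\times\bm q+b_T\,{\bm{curl}}\,\bm q$, the vanishing $b_T|_f=0$, and the fact that $\nabla b_T|_f$ is a nonzero multiple of $b_f\bm n_f$, and noting that among the face bubbles only $b_f$ is nonzero on $f$, one obtains on each face that ${\bm{curl}}(b_T\bm q)\times\bm n_f|_f$ equals a nonzero scalar multiple of $b_f^2\,\bm q_f|_f$. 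Writing $\bm q_f=\bm p\times\bm n_f$ with $\bm p\in\bm{RM}(T)$ and recalling from Theorem \ref{kornpre} that ${\rm RT}_0(f)=\{(\bm v\times\bm n_f)|_f:\bm v\in\bm{RM}(T)\}$, the vanishing of \eqref{dof2} reads $\int_f b_f^2\,(\bm p\times\bm n_f)\cdot(\bm v\times\bm n_f)\,ds=0$ for every $\bm v\in\bm{RM}(T)$; choosing $\bm v=\bm p$ and using $b_f>0$ in the interior of $f$ forces $(\bm p\times\bm n_f)|_f=\bm q_f|_f=\bm 0$.

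It remains to upgrade $\bm q_f|_f=\bm 0$ to $\bm q_f\equiv\bm 0$ on $T$, i.e.\ to show the restriction map $\bm q_f\mapsto\bm q_f|_f$ from $\bm Q_f^*(T)$ into ${\rm RT}_0(f)$ (both of dimension $3$) is injective---and this is exactly where the orthogonality constraint defining $\bm Q_f^*(T)$ enters. I would argue that the kernel of $\bm p\mapsto(\bm p\times\bm n_f)|_f$ inside $\{\bm p\times\bm n_f:\bm p\in\bm{RM}(T)\}$ consists precisely of functions equal to the signed distance to the plane of $f$ times a constant tangential vector $\bm c$; such a function vanishes on $f$ but not on $T$, and the defining constraint $\int_T(\bm p\times\bm n_f)\cdot(\bm w\times\bm n_f)\,b_Tb_f\,dx=0$ for all constant $\bm w$ forces $\bm c=\bm 0$, because the integral of this (positive) distance against $b_Tb_f$ over $T$ is strictly positive. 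Hence the restriction is injective, $\bm q_f=\bm 0$ for every face, so $\bm q=\bm 0$ and $\bm v={\bm{curl}}(b_T\bm q)=\bm 0$. I expect the main obstacle to be precisely this last step: establishing the local curl-trace identity and verifying that the orthogonality in $\bm Q_f^*(T)$ annihilates exactly the kernel of the restriction to $f$; the remaining steps follow routinely from the zero normal trace of the enrichment and from ${\rm BDM}_1$ unisolvence.
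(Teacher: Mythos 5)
Your proposal is correct, and the comparison with the paper is lopsided in your favor: the paper gives no argument for this theorem at all, deferring to ``the argument of Theorem 3.3 in \cite{johnny2012family}'', whereas you supply a complete, self-contained proof. Your template (directness of the sum from the vanishing normal trace of ${\bm{curl}}(b_T\bm q)$ together with ${\rm BDM}_1$ unisolvence by normal moments, then a $24=24$ dimension count plus injectivity) is the one such citations implicitly invoke, but the step that genuinely cannot be copied from \cite{johnny2012family} is exactly the one you isolate: there the face enrichment is built from constant tangential vectors, while here $\bm Q_f^*(T)$ is built from $\{\bm p\times\bm n_f:\bm p\in\bm{RM}(T)\}$ subject to an orthogonality constraint weighted by $b_Tb_f$. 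Your identification of the kernel of the restriction $\bm q_f\mapsto\bm q_f|_f$ (signed distance to the plane of $f$ times a constant tangential vector) and your observation that the defining constraint of $\bm Q_f^*(T)$ annihilates precisely this kernel (because the signed distance, $b_T$ and $b_f$ are all strictly positive inside $T$) is the substantive content the paper leaves unproved, and it is correct; combined with the trace identity ${\bm{curl}}(b_T\bm q)\times\bm n_f|_f = c\, b_f^2\,\bm q_f|_f$ ($c\neq 0$ constant) and Theorem \ref{kornpre}'s identification of $\{(\bm p\times\bm n_f)|_f : \bm p \in \bm{RM}(T)\}$ with ${\rm RT}_0(f)$, it makes the tangential functionals \eqref{dof2} do their job. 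Two small caveats: your reading of \eqref{dof1} as three scalar moments per face is the right one (the paper's ``$\bm\mu\in(P_1(f))^3$'' must be read as $P_1(f)$, otherwise the count $12$ for ${\rm BDM}_1(T)$ fails), and your argument uses $\bm V_{1,T}^0={\rm BDM}_1(T)$, which is consistent with the paper's stated restriction to that case but would need interior degrees of freedom added if ${\rm RT}_1(T)$ were used, since then normal moments alone do not determine $\bm v_0$.
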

\begin{proof}
The proof can be done similarly to the argument of Theorem 3.3 in \cite{johnny2012family}. Therefore, we complete the proof. \end{proof}

\section{Conclusion}\label{con}

We have proven a Korn's inequality for the piecewise $H^1$ space. Our characterization of the  Korn's inequality considers to be the rotated version of what is characterized in \cite{mardal2006observation}.  We further have shown some sharpness of the Korn's inequality. This result has been applied to a number of finite elements to check that they satisfy the Korn's inequality or not. Also the minimal jump terms that appear in the Korn's inequality are explicit enough so that they can be used for construction of finite element spaces that satisfy the Korn's inequality.  

%%Vancouver style references.
\bibliographystyle{plain}
\bibliography{bibKorn}

\end{document}